\newcommand{\U}{\mathcal U}
\newcommand{\V}{\mathcal V}
\newcommand{\Hol}{\mathrm{Hol}}
\newcommand{\IZ}{\mathbb Z}
\newcommand{\Ra}{\Rightarrow}
\newtheorem{theorem}{Theorem}
\newtheorem{proposition}{Proposition}
\newtheorem{problem}{Problem}
\newtheorem{example}{Example}
\newtheorem{question}{Question}
\newtheorem{lemma}{Lemma}
\theoremstyle{definition}
\newtheorem{remark}{Remark}
\begin{document}
\title[Subsemigroups of the hyperspaces over topological groups]{Embedding topological semigroups into the hyperspaces over topological groups}
\author{Taras Banakh, Olena Hryniv}
\begin{abstract} We study algebraic and topological properties of subsemigroups of the hyperspace $\exp(G)$ of non-empty compact subsets of a topological group $G$ endowed with the Vietoris topology and the natural semigroup operation. On this base we prove that a compact Clifford topological semigroup $S$ is topologically isomorphic to a subsemigroup of $\exp(G)$ for a suitable topological group $G$ if and only if $S$ is a topological inverse semigroup with zero-dimensional idempotent semilattice. 
\end{abstract}
\subjclass{20M18, 20M30, 22A15, 54B20}
\address{Uniwersytet Humanistyczno-Przyrodniczy Jana Kochanowskiego, Kielce, Poland \newline and  
 Department of Mathematics, Lviv National University, Lviv, Ukraine}
\email{tbanakh@yahoo.com, olena\_hryniv@ukr.net}
\maketitle

\section{Introduction}

According to \cite{Ber} (and \cite{Trn}) each (commutative) semigroup $S$ embeds into the global semigroup $\Gamma(G)$ over a suitable (Abelian) group $G$. The global semigroup $\Gamma(G)$ over $G$ is the set of all non-empty subsets of $G$ endowed with the semigroup operation $(A,B)\mapsto AB=\{ab:a\in A,\; b\in B\}$. If $G$ is a topological group, then the global semigroup $\Gamma(G)$ contains a subsemigroup $\exp(G)$ consisting of all non-empty compact subsets of $G$ and carrying a natural topology turning it into a topological semigroup. This is the Vietoris topology generated by the sub-base consisting of the sets $$U^+=\{K\in\exp(S):K\subset U\}\mbox{ and }U^-=\{K\in\exp(S):K\cap U\ne\emptyset\}$$ where $U$ runs over open subsets of $S$.
Endowed with the Vietoris topology the semigroup $\exp(G)$ will be referred to as the {\em hypersemigroup} over $G$ (because its underlying topological space is the hyperspace $\exp(G)$ of $G$, see \cite{TZ}). Since each topological group $G$ is Tychonov, so is the hypersemigroup $\exp(G)$. The group $G$ can be identified with the subgroup $\{K\in\exp(G):|K|=1\}$ of $\exp(G)$ consistsing of singletons.

The main object of our study in this paper is the class $\mathcal H$ of topological semigroups $S$ that embed into the hypersemigroups $\exp(G)$ over topological groups $G$. We shall say that a topological semigroup $S_1$ embeds into another topological semigroup $S_2$ if there is a semigroup homomorphism $h:S_1\to S_2$ that is a topological embedding.
In is clear that the class $\mathcal H$ contains all topological groups. On the other hand, the compact topological semigroup $([0,1],\min)$ does not belong to $\mathcal H$, see \cite{BL}. In this paper we establish some inheritance properties of the class $\mathcal H$ and on this base detect compact Clifford semigroups belonging to $\mathcal H$: those are precisely compact Clifford inverse semigroups with zero-dimensional idempotent semilattice.

Let us recall that a semigroup $S$ is 
\begin{itemize}
\item {\em Clifford} if each element $x\in S$ lies in a subgroup of $S$; 
\item {\em inverse} if each element $x\in S$ is {\em uniquely invertible} in the sense that there is a unique element $x^{-1}\in S$ called the {\em inverse} of $x$ such that 
$xx^{-1}x=x$ and $x^{-1}xx^{-1}=x^{-1}$;
\item {\em algebraically regular} if each element $x\in S$ is {\em regular} in the sense that $xyx=x$ for some $y\in S$;
\item a {\em semilattice} if $xx=x$ and $xy=yx$ for all $x,y\in S$. 
\end{itemize}
It is known \cite[1.17]{CP}, \cite[II.1.2]{Pet} that a semigroup $S$ is inverse if and only if $S$ is algebraically regular and the set $E=\{x\in S:xx=x\}$ of idempotents is a commutative subsemigroup of $S$. The subsemigroup $E$ will be called the {\em idempotent semilattice} of $S$. 
An inverse semigroup $S$ is Clifford if and only if $xx^{-1}=x^{-1}x$ for all $x\in S$.
In this case $S=\bigcup_{e\in E}H_e$ where $H_e=\{x\in S:xx^{-1}=e=x^{-1}x\}$ are the maximal subgroups of $S$ corresponding to the idempotents $e$ of $S$.

The above classes of semigroups relate as follows:

\begin{picture}(300,70)(-10,-10)
\put(10,40){group}
\put(42,38){\vector(2,-1){24}}
\put(-5,0){semilattice}
\put(46,5){\vector(3,2){20}}
\put(70,20){Clifford inverse}
\put(140,25){\vector(3,2){20}}
\put(140,18){\vector(3,-2){20}}
\put(167,40){inverse}
\put(203,38){\vector(3,-2){20}}
\put(165,0){Clifford}
\put(203,5){\vector(3,2){20}}
\put(230,20){algebraically regular}
\end{picture}

These classes form varieties of semigroups, which means that they are closed under taking subdirect products and homomorphic images. As we shall see later, the class $\mathcal H$ is not closed under homomorphic images and thus does not form a variety but is invariant with respect to many operations over topological semigroups.

By a {\em topological semigroup} we understand a topological space $S$ endowed with a continuous semigroup operation. A topological semigroup $S$ is called a {\em topological inverse semigroup} if $S$ is an inverse semigroup and the inversion map $(\cdot)^{-1}:S\to S$, $(\cdot)^{-1}:x\mapsto x^{-1}$ is continuous.  

Now we define three operations over topological semigroups that do not lead out the class $\mathcal H$. 

We say that a topological semigroup $S$ is a {\em subdirect product} of a family $\{S_\alpha:\alpha\in A\}$ of topological semigroups if $S$ embeds into the Tychonov product $\prod_{\alpha\in A}S_\alpha$ endowed the coordinatewise semigroup operation. 

Another operation is the {\em semidirect product} $S\leftthreetimes^\sigma G$ of a topological semigroup $S$ and a topological group $G$ acting on $S$ by authomorphisms. More precisely, let $Aut(S)$ denote the group of topological auto-isomorphisms of the semigroup $S$  and $\sigma:G\to Aut(S)$ be a group homomorphism defined on a topological group $G$ and such that the induced map $$\widetilde\sigma:G\times S\to S,\;\tilde\sigma:(g,s)\mapsto \sigma(g)(s)$$is continuous. By $S \leftthreetimes^\sigma G$ we denote the topological semigroup whose underlying topological space is the Tychonov product $S\times G$ and the semigroup operation is given by the formula $(s,g)*(s',g')=(sg(s'),gg')$. The semidirect product $S\leftthreetimes^{id}Aut(S)$ of a semigroup $S$ with its automorphism group is called {\em the holomorph} of $S$ and is denoted by $\Hol(S)$.

One can easily check that for an algebraically  regular (inverse) topological semigroup $S$ the semidirect product $S\leftthreetimes^\sigma G$ with any topological group $G$ acting on $S$ is an algebraically regular (inverse) topological semigroup. The situation is different for Clifford topological semigroups: the semidirect product $S\leftthreetimes^\sigma G$ is Clifford inverse if and only if $S$ is Clifford inverse and the group $G$ acts trivially on the idempotents of $S$, see Proposition~\ref{semidir}.

The third operation that does not lead out the class $\mathcal H$ is attaching zero to a compact semigroup from $\mathcal H$. Given a topological semigroup $S$ let $S^0=S\cup\{0\}$ denote the extension of $S$ by an isolated point $0\notin S$ such that $s0=0s=0$ for all $s\in S^0$. 

\begin{theorem}\label{t1} The class $\mathcal H$ is closed under the following three operations:
\begin{enumerate}
\item subdirect products;
\item semidirect products with Abelian topological groups;
\item attaching zero to compact semigroups from $\mathcal H$.
\end{enumerate}
\end{theorem}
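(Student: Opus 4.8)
The plan is to handle the three operations independently, in each case exhibiting a concrete topological group $G'$ together with an explicit embedding into $\exp(G')$. For the subdirect products (1) I would first isolate the purely hyperspace-theoretic fact that the product of hyperspaces embeds into the hyperspace of the product: for any family $\{G_\alpha\}$ of topological groups the map $\theta:\prod_\alpha\exp(G_\alpha)\to\exp(\prod_\alpha G_\alpha)$ given by $\theta\big((K_\alpha)_\alpha\big)=\prod_\alpha K_\alpha$ is a semigroup homomorphism that is a topological embedding. Here $\prod_\alpha K_\alpha$ is compact by the Tychonov theorem; the identity $(\prod_\alpha K_\alpha)(\prod_\alpha L_\alpha)=\prod_\alpha(K_\alpha L_\alpha)$ (coordinatewise multiplication) makes $\theta$ a homomorphism; injectivity follows by projecting onto coordinates; and that $\theta$ is a topological embedding is the standard Vietoris computation (preimages of the subbasic sets $U^-,U^+$ are open, and the projections $\exp(\prod_\beta G_\beta)\to\exp(G_\alpha)$ recover the inverse on the image). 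Granting this, if each $S_\alpha\hookrightarrow\exp(G_\alpha)$ then $S\hookrightarrow\prod_\alpha S_\alpha\hookrightarrow\prod_\alpha\exp(G_\alpha)\to\exp(\prod_\alpha G_\alpha)$, so $S\in\mathcal H$.

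For attaching a zero (3), which I expect to be the most elementary, the key observation is that if $S$ is compact and $i:S\to\exp(H)$ is an embedding, then $N:=\bigcup i(S)$ is a compact subsemigroup of $H$: for $a\in K$, $b\in K'$ with $K,K'\in i(S)$ one has $ab\in KK'\in i(S)\subseteq\exp(N)$, so $N$ is closed under multiplication, and $N$ is compact as the union of a compact family of compact sets. Since a compact subsemigroup of a topological group is a subgroup, $N$ is a compact subgroup, and as every $K\in i(S)$ satisfies $K\subseteq N$ we get $NK=KN=N$; thus $N$ is a two-sided zero for $i(S)$. To realise $N$ as an \emph{isolated} adjoined zero I would pass to $H\times C$ for a nontrivial compact group $C$, embed $S$ by $s\mapsto i(s)\times\{e_C\}$, and send $0\mapsto N\times C$. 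One checks directly that this is a homomorphism, that the Vietoris-open set $\big(H\times(C\setminus\{e_C\})\big)^-$ contains the image of $0$ but misses the image of $S$ (whose members have all $C$-coordinates equal to $e_C$), so that $0$ is isolated, and hence $S^0\in\mathcal H$.

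The semidirect product (2) is where the real difficulty lies, and I would base it on the following reduction: it suffices to produce a topological group $\widehat H$ carrying a \emph{continuous} action $\tau$ of $G$ by automorphisms together with a $G$-equivariant topological embedding $\widehat\imath:S\to\exp(\widehat H)$, i.e.\ $\widehat\imath(\sigma(g)(s))=\tau_g(\widehat\imath(s))$. Indeed, then $(s,g)\mapsto\widehat\imath(s)\times\{g\}$ embeds $S\leftthreetimes^\sigma G$ into $\exp(\widehat H\rtimes G)$, because in $\exp(\widehat H\rtimes G)$ one has $(A\times\{g\})(B\times\{g'\})=\big(A\cdot\tau_g(B)\big)\times\{gg'\}$, which matches the operation $(s,g)*(s',g')=(s\,\sigma(g)(s'),gg')$ exactly by equivariance; that this map is a topological embedding again reduces to the product-hyperspace lemma of part (1). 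To build $\widehat H$ I would co-induce along $G$, taking a group of functions $G\to H$ inside $\prod_{g\in G}H$ with pointwise multiplication, letting $G$ act by translation $(\tau_h\phi)(g)=\phi(h^{-1}g)$, and setting $\widehat\imath(s)=\prod_{g\in G}i(\sigma(g^{-1})(s))$, which is equivariant with respect to the shift. The hard part will be the continuity of the action $G\times\widehat H\to\widehat H$, that is, making $\widehat H\rtimes G$ a genuine topological group: the orbit maps $g\mapsto\sigma(g^{-1})(s)$ are continuous thanks to the continuity of $\widetilde\sigma$, so the action is continuous on $\widehat\imath(S)$, but transporting this to the subgroup generated by $\widehat\imath(S)$ requires a group topology on $\widehat H$ under which translation is jointly continuous. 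This is precisely the point at which I would invoke the Abelian hypothesis: an Abelian $G$ has coinciding left and right uniformities, so translation on the appropriate group of uniformly continuous functions $G\to H$ is continuous, which yields the required continuous $G$-action and completes the embedding.
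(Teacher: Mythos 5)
Your treatments of (1) and (3) are correct and close to the paper's. Item (1) is verbatim the paper's argument (the product-of-hyperspaces embedding $(K_\alpha)_\alpha\mapsto\prod_\alpha K_\alpha$). For (3) the paper performs the same preliminary reduction you do -- replacing $H$ by the compact group $N=\bigcup i(S)$, which is the content of its Proposition~\ref{p1} -- but then simply enlarges $N$ to a strictly larger compact group $G$ and sends $0\mapsto G$, using the Vietoris set $(G\setminus N)^-$ to isolate the image of $0$; your variant $0\mapsto N\times C$ inside $H\times C$ achieves the same end and is equally valid.

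The gap is in (2). Your reduction -- find a topological group $\widehat H$ with a continuous $G$-action by automorphisms and a $G$-equivariant embedding $S\to\exp(\widehat H)$, then map $(s,g)\mapsto \widehat\imath(s)\times\{g\}$ into $\exp(\widehat H\leftthreetimes G)$ -- is exactly the paper's (Lemmas~\ref{l1}--\ref{l3}), and so is the formula $\widehat\imath(s)=\prod_{g\in G}i(\sigma(g^{-1})s)$. But your proposed realization of $\widehat H$ as a group of \emph{uniformly continuous} functions $G\to H$ is incompatible with that formula: the set $\prod_{g}K_g$ consists of \emph{all} selections $\phi$ with $\phi(g)\in K_g$, and these are in general wildly discontinuous, so $\widehat\imath(s)$ would not lie in $\exp(\widehat H)$ for your $\widehat H$ (nor be compact in any topology finer than the pointwise one). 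The paper instead takes $\widehat H=H^G$, the full Tychonov power with the product topology, for which $\prod_g K_g$ is compact and the embedding claim is again the product-hyperspace lemma of part (1). Note also that the Abelian hypothesis enters for a different reason than the one you give: it is needed so that the shift $(h,(s_\alpha)_{\alpha\in G})\mapsto(s_{h\alpha})_{\alpha\in G}$ is a left action for which the diagonal map $s\mapsto(f(\alpha s))_{\alpha\in G}$ is equivariant (one needs $f(\alpha hs)=f(h\alpha s)$, i.e.\ $\alpha h=h\alpha$); it has nothing to do with the coincidence of uniformities. Your underlying worry about the joint continuity of the shift action on the function group is legitimate -- the paper leaves Lemmas~\ref{l2} and~\ref{l3} to the reader and does not address this point -- but the repair you sketch destroys the construction rather than completing it, so as written part (2) is not a proof.
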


\begin{proof} 1. The first item follows from the fact that for any family $\{H_\alpha\}_{\alpha\in A}$ of topological groups the map
$$E:\prod_{\alpha\in A}\exp(H_\alpha)\to \exp(\prod_{\alpha\in A}H_\alpha),\quad E:(K_\alpha)_{\alpha\in A}\mapsto \prod_{\alpha\in A}K_\alpha$$
is an embedding of topological semigroups.
\smallskip

2. The second item is less trivial and will be proved in Section~\ref{s3}.
\smallskip

3. If $S\in\mathcal H$ is a compact semigroup, then there is an embedding $f:S\to\exp(H)$ of $S$ into the hypersemigroup $\exp(H)$ of some compact topological group $H$, see Proposition~\ref{p1} below. Take any compact topological group $G$ containing $H$ so that $H\ne G$ and define the map $f^0:S^0\to \exp(G)$ letting $f^0|S=f$ and $f^0(0)=G$. It can be shown that $f^0$ is a topological embedding and thus $S^0\in\mathcal H$.
\end{proof}

\begin{problem} Is the class $\mathcal H$ closed under taking semidirect products with arbitrary (not necessarily abelian) topological groups?
\end{problem}

In light of Theorem~\ref{t1} it is natural to consider the smallest class $\mathcal H_0$  of topological semigroups, closed under subdirect products, semidirect products with Abelian topological groups and attaching zero to compact semigroups from $\mathcal H$. 
Since the class of topological inverse semigroups is closed under those three operations, we conclude that $\mathcal H_0$ is a subclass of the class of topological inverse semigroups. Consequently, $\mathcal H_0$ is strictly smaller than the class $\mathcal H$ (because for a topological group $G$ the semigroup $\exp(G)$ is inverse if and only if $|G|\le2$).

Nonetheless we can ask the following 

\begin{question}\label{q1} Does each (compact) topological inverse semigroup $S\in\mathcal H$ belong to the class $\mathcal H_0$?
\end{question}

In this respect let us note the following property of compact semigroups from the class $\mathcal H$.

\begin{proposition}\label{p1} A compact topological semigroup $S$ belongs to the class $\mathcal H$ if and only if $S$ embeds into the hypersemigroup $\exp(G)$ over a compact topological group $G$.
\end{proposition}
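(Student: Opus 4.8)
The plan is to establish the nontrivial implication: assuming a compact topological semigroup $S$ admits an embedding $h\colon S\to\exp(G)$ into the hypersemigroup over some (a priori noncompact) topological group $G$, I would manufacture a \emph{compact} subgroup $K$ of $G$ with $h(S)\subseteq\exp(K)$, and then corestrict $h$. The reverse implication is immediate from the definition of $\mathcal H$, since every compact group is in particular a topological group.

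First I would form the union $K:=\bigcup_{s\in S}h(s)\subseteq G$ of all the compact sets occurring in the image. Since $S$ is compact and $h$ is continuous, the image $h(S)$ is a compact subset of $\exp(G)$; and the union of a compact (for the Vietoris topology) family of compacta is again compact. Concretely, given an open cover $\{U_i\}$ of $K$ by open subsets of $G$, each $C\in h(S)$ lies in some basic Vietoris neighbourhood $V_C^+$ with $V_C$ a finite union of the $U_i$, the family $\{V_C^+\}$ covers the compactum $h(S)$, and a finite subcover yields a finite subcover of $K$. Hence $K\in\exp(G)$.

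The next and conceptually central step is to observe that $K$ is a \emph{subsemigroup} of $G$. Indeed, for $a,b\in K$ pick $s,t\in S$ with $a\in h(s)$ and $b\in h(t)$; then $ab\in h(s)\,h(t)=h(st)\subseteq K$ because $h$ is a homomorphism and $h(st)\subseteq K$ by definition of $K$. Thus $K$ is a compact subsemigroup of the group $G$, and, being a subset of a group, it is two-sided cancellative. At this point I would invoke the classical theorem of Numakura that a cancellative compact Hausdorff topological semigroup is a topological group, concluding that $K$ is a compact subgroup of $G$. This is the step I expect to carry the real weight: the whole reduction from arbitrary to compact ambient groups is packaged into turning ``compact subsemigroup of a group'' into ``compact subgroup'' via cancellativity.

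Finally, since $K$ is compact and hence closed in the Hausdorff group $G$, the Vietoris topology of $\exp(K)$ coincides with the subspace topology inherited from $\exp(G)$, and the hyperspace operation on $\exp(K)$ agrees with the one inherited from $\exp(G)$; so $\exp(K)$ is a topological subsemigroup of $\exp(G)$. As every $h(s)$ is contained in $K$, the original embedding corestricts to a topological semigroup embedding $S\to\exp(K)$ with $K$ a compact topological group, which is exactly the required conclusion.
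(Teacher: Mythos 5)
Your proposal is correct and follows essentially the same route as the paper: form the union $K=\bigcup_{s\in S}h(s)$, show it is a compact subsemigroup of $G$, and invoke the theorem that a compact cancellative topological semigroup is a topological group (the paper cites \cite[Th.1.10]{CHK1}, which is the Numakura-type result you name). The only difference is that you spell out the Vietoris-compactness argument for the union, where the paper simply cites \cite[2.1.2]{TZ}.
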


\begin{proof} Given a compact topological semigroup $S \in \mathcal H$ find  an embedding $h: S \to exp(G)$ of $S$ into the hypersemigroup $\exp(G)$ over a topological group $G$. It follows from \cite[2.1.2]{TZ} that the union $H=\bigcup_{s\in S}h(s)\subset G$ is compact. Moreover, $H$ is a subsemigroup of $G$. Indeed, given arbitrary points $y,y'\in H$ find points $x,x'\in S$ with $y\in h(x)$ and $y'\in h(x')$. Then $yy'\in h(x)h(x')=h(xx')\subset H$. Being a compact cancellative semigroup, $H$ is a topological group by \cite[Th.1.10]{CHK1}. Since $h(S)\subset\exp(H)\subset\exp(G)$, we see that $S$ embeds into the hypersemigroup $\exp(H)$ over the compact topological group $H$.
\end{proof}

We shall affirmatively answer the ``compact'' part of Question~\ref{q1} under an additional assumption that $S\in\mathcal H$ is Clifford.
For this we first establish some specific algebraic and topological properties of algebraically regular semigroups $S\in\mathcal H$.

Let us call two elements $x,y$ of an inverse semigroup $S$ {\em conjugated} if $x=zyz^{-1}$ and $y=z^{-1}xz$ for some element $z\in S$. 
 For an element $e\in E$ of a semilattice $E$ let ${\uparrow}e=\{f\in E:ef=e\}$ denote the principal filter of $e$. We say that two elements $e,f\in E$ are {\em incomparable} if their product $ef$ differs from $e$ and $f$ (this is equivalent to $e\notin{\uparrow}f$ and $f\notin{\uparrow}e$).

A topological space $X$ is called 
\begin{itemize}
\item {\em totally disconnected} if for any distinct points $x,y\in X$ there is a closed-and-open subset $U\subset X$ containing $x$ but not $y$;
\item {\em zero-dimensional} if the family of closed-and-open sets forms a base of the topology of $X$.
\end{itemize}
It is known that a compact Hausdorff space is zero-dimensional if and only if it is totally disconnected.

\begin{theorem}\label{t2} If a topological semigroup $S\in\mathcal H$ is algebraically regular, then 
\begin{enumerate}
\item $S$ is a topological inverse semigroup;
\item the idempotent semilattice $E$ of $S$ has totally disconnected principal filters ${\uparrow}e$, $e\in E$;
\item an element $x\in S$ is an idempotent if and only if $x^2x^{-1}$ is an idempotent;
\item any distinct conjugated idempotents of $S$ are incomparable.
\end{enumerate}
\end{theorem}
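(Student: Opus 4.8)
The plan is to reduce everything to the concrete description of regular elements and idempotents of $\exp(G)$, working inside a fixed embedding $S\subseteq\exp(G)$. First I would establish the basic dictionary. An idempotent $K\in\exp(G)$ satisfies $KK=K$, so $K$ is a compact cancellative subsemigroup of $G$ and hence, by \cite[Th.~1.10]{CHK1}, a compact subgroup; in particular $e_G\in K$. Next, for a regular element $A$ with $ABA=A$, put $V=BAB$; then $AVA=A$, $VAV=V$, and $E_L=AV$, $E_R=VA$ are idempotents (compact subgroups) with $E_LA=A=AE_R$. Picking $a_1\in A$ and $v_1\in V$ with $a_1v_1=e_G$ (possible since $e_G\in AV=E_L$), the computation $x=a_1(v_1x)\in a_1E_R$ for $x\in A$ gives $A\subseteq a_1E_R\subseteq AE_R=A$, so $A=a_1E_R=E_La_1$ is a single coset of a compact subgroup; a short computation then identifies the semigroup inverse with the setwise inverse, $A^{-1}=\{a^{-1}:a\in A\}$. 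This lemma underlies all four items.

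For item (1): given idempotents $E_1,E_2\in S$ the product $E_1E_2$ is again regular, hence a coset; but it contains $e_G$, and a coset containing $e_G$ is a subgroup, so $E_1E_2$ is a subgroup and equals its setwise inverse $E_2E_1$. Thus idempotents commute, and with algebraic regularity the cited criterion gives that $S$ is inverse; continuity of inversion is then free, since on $S$ it coincides with the restriction of the Vietoris homeomorphism $K\mapsto\{g^{-1}:g\in K\}$ of $\exp(G)$. For item (3), writing $A=aK$ with $K=A^{-1}A$, idempotency of $A$ is equivalent to $e_G\in A$, while $A^2A^{-1}=AAA^{-1}$ is a coset, hence idempotent iff it contains $e_G$, i.e. iff $AA\cap A\ne\emptyset$; the relation $ak_1\,ak_2=ak_3$ forces $a=k_1^{-1}k_3k_2^{-1}\in K$, so $AA\cap A\ne\emptyset$ already forces $A=K\ni e_G$, and the converse is immediate.

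For item (2) I would identify ${\uparrow}e$ with the family of closed subgroups $F$ of the compact group $K=E_e$ that lie in $S$; by item (1) these pairwise commute. The goal is to show this subspace of $\exp(K)$ is totally disconnected. I would separate two distinct members by a clopen set, passing modulo the identity component $K_0$ to the profinite quotient $K/K_0$ (where clopen subgroups abound and yield clopen conditions in the hyperspace) and using the rigidity of connected closed subgroups inside a commuting family, so that no nondegenerate connected subset of ${\uparrow}e$ can survive.

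Item (4) is where I expect the real work. If $E_1,E_2$ are conjugated, $E_1=ZE_2Z^{-1}$ and $E_2=Z^{-1}E_1Z$, then substituting one relation into the other and using commutativity of idempotents yields $Z^{-1}Z\subseteq E_2$ and $ZZ^{-1}\subseteq E_1$; writing $Z=zK$ with $K=Z^{-1}Z$, the inclusion $K\subseteq E_2$ collapses the coset conjugation to honest group conjugation $E_1=zE_2z^{-1}$. Comparability then says one of these conjugate compact subgroups contains the other, say $zE_2z^{-1}\subseteq E_2$, and the crux is to upgrade this to equality (whence $E_1=E_2$). The natural tool is Haar measure: conjugation is a topological automorphism, a proper closed subgroup of a compact group is Haar-null, so equal Haar measure of the nested conjugate subgroups forces them to coincide. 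I expect this to be the main obstacle, and it is genuinely where compactness enters — without keeping the idempotents inside a compact group (e.g.\ via Proposition~\ref{p1}) a bicyclic-type configuration would already furnish comparable conjugated idempotents — so the argument must exploit that the relevant subgroups are compact.
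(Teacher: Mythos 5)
Your treatment of items (1) and (3) is correct and is essentially the paper's own argument: the paper packages your ``dictionary'' as Proposition~\ref{p3} (every regular element of $\exp(G)$ is a coset $Hx$ of a compact subgroup, with setwise inverse as semigroup inverse), and then (1) and (3) follow by the same coset manipulations you give. The first genuine gap is item (2). ``Rigidity of connected closed subgroups inside a commuting family'' is not a lemma you possess --- it \emph{is} the difficulty. Passing to the profinite quotient $K/K_0$ loses all information when $K$ is connected, and nothing you have written rules out a nondegenerate connected family of closed subgroups of, say, a connected compact Lie group forming a topological semilattice under set product. The input the paper uses is the Montgomery--Zippin type theorem (Bredon, Cor.~II.5.6): a compact subgroup $C$ of a Lie group has a neighborhood $O(C)$ such that every compact subgroup contained in $O(C)$ is conjugate into $C$. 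From this the paper shows that each lower cone ${\downarrow}E_1$ is closed-and-open in the semilattice of idempotents of $\exp(H)$ for $H$ a compact Lie group, and then reduces the general compact group to the Lie case by projecting along an inverse spectrum of Lie quotients. Without some substitute for that rigidity theorem, your item (2) is a restatement of the goal.

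For item (4) your instinct that compactness is the crux is exactly right, but the proposed fix does not work, and the difficulty is deeper than you suspect. First, the Haar-measure argument is broken as stated: a proper closed subgroup of finite index is not Haar-null, and, more importantly, $E_2$ and $zE_2z^{-1}$ sit inside the ambient group $G$, which need not be compact, so there is no common Haar measure with which to compare them; two isomorphic compact groups can perfectly well be nested properly. What one actually needs is that $zE_2z^{-1}\subseteq E_2$ forces equality; this holds when $z$ generates a relatively compact subgroup (the closure of $\{z^n:n\ge1\}$ is then a compact group containing $z^{-1}$ as a limit of positive powers, whence also $z^{-1}E_2z\subseteq E_2$), but it fails in general. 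Concretely, in $G=\mathbb{Q}_p\rtimes\langle t\rangle$ with $t$ acting by multiplication by $p$, the compact set $Z=\{(a,t):a\in\mathbb{Z}_p\}$ generates an inverse subsemigroup of $\exp(G)$ isomorphic to the bicyclic semigroup, in which $ZZ^{-1}=\mathbb{Z}_p$ and $Z^{-1}Z=p^{-1}\mathbb{Z}_p$ are distinct, conjugated (via $Z$) and comparable idempotents. So your parenthetical worry is substantiated: the statement genuinely needs the conjugating coset $Z=Hz$ to live in a compact group, which is not guaranteed by the hypotheses of Theorem~\ref{t2}. You should also be aware that the paper's own proof of item (4) stops after establishing $E=xFx^{-1}$ and never derives incomparability from it, so the step you could not complete is missing from the source as well rather than being something you merely failed to reconstruct.
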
 

This theorem will be proved in Section~\ref{s5}.

\begin{remark} Theorem~\ref{t2} allows us to construct many examples of  algebraically regular topological semigroups non-embeddable into the hypersemigroups over  topological groups. The first two items of this proposition imply the result of \cite{BL} that non-trivial rectangular semigroups and connected topological semilattices do not belong to the class $\mathcal H$. The last two items imply that the class $\mathcal H$ does not contain neither  Brandt  nor bicyclic semigroups. A {\em bicyclic semigroup} is a semigroup generated by two elements $p,q$ connected by the relation $qp=1$. 

By a {\em Brandt semigroup} we understand a semigroup of the form $$B(H,\kappa)=(\kappa\times H\times \kappa)\cup\{0\}$$ where $H$ is a group, $\kappa$ is a non-empty set, and the product $(\alpha,h,\beta)*(\alpha',h',\beta')$ of two non-zero elements of $B(H,\kappa)$ is equal to $(\alpha,hh',\beta')$ if $\beta=\alpha'$ and $0$ otherwise.  Brandt and bicyclic semigroups play an important role in the structure theory of inverse semigroups, see \cite{Pet}. 
\end{remark}

The following theorem answers affirmatively the ``compact'' part of Question~\ref{q1}.

\begin{theorem}\label{t5} For a compact topological Clifford semigroup $S$ the following conditions are equivalent:
\begin{enumerate}
\item $S$ belongs to the class $\mathcal H$;
\item $S$ belongs to the class $\mathcal H_0$;
\item $S$ is a topological inverse semigroup with zero-dimensional idempotent semilattice $E$;
\item $S$ embeds into the product $\prod_{e\in E}H_e^\circ$;
\item $S$ embeds into the hypersemigroup $\exp(G)$ of the compact topological group $G=\prod_{e\in E}\widetilde H_e$, where for each idempotent $e\in E$ \ $\widetilde H_e$ is a non-trivial compact topological group containing the maximal group $H_e$.
\end{enumerate}
\end{theorem}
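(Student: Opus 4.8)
The plan is to establish the cycle $(1)\Ra(3)\Ra(4)\Ra(5)\Ra(1)$ together with $(4)\Ra(2)\Ra(1)$, which forces all five conditions to be equivalent. Several links are immediate. The implication $(5)\Ra(1)$ holds by the very definition of $\mathcal H$, and $(2)\Ra(1)$ holds because $\mathcal H$ is closed under the three defining operations of $\mathcal H_0$ by Theorem~\ref{t1}, so $\mathcal H_0\subseteq\mathcal H$. For $(1)\Ra(3)$ I would note that a Clifford semigroup is algebraically regular, so Theorem~\ref{t2}(1) turns $S$ into a topological inverse semigroup; being compact, $S$ has a closed idempotent set $E$, which is a compact topological semilattice and hence has a least element $0_E=\prod E$, whence $E={\uparrow}0_E$ is totally disconnected by Theorem~\ref{t2}(2) and therefore (being compact) zero-dimensional. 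Thus the real content sits in $(3)\Ra(4)$, with $(4)\Ra(5)$ and $(4)\Ra(2)$ as comparatively soft consequences.

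For $(3)\Ra(4)$ I would use that under $(3)$ the semigroup $S$ is Clifford \emph{and} inverse, hence a semilattice of its maximal groups, $S=\bigcup_{e\in E}H_e$, with central idempotents. For each $e\in E$ I introduce a candidate coordinate map $\phi_e\colon S\to H_e^\circ$ by $\phi_e(x)=xe$ when $e\le xx^{-1}$ (equivalently $xe\in H_e$) and $\phi_e(x)=0$ otherwise. Centrality of idempotents gives $\epsilon(xy)=\epsilon(x)\epsilon(y)$ for the retraction $\epsilon\colon x\mapsto xx^{-1}$, so the condition $e\le\epsilon(xy)$ is equivalent to $e\le\epsilon(x)$ and $e\le\epsilon(y)$, and on that set $(xe)(ye)=xye$; hence each $\phi_e$ is a homomorphism. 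The combined map $\Phi=(\phi_e)_{e\in E}\colon S\to\prod_{e\in E}H_e^\circ$ is then a homomorphism, and it is injective because the ``diagonal'' coordinate $e=xx^{-1}$ returns $\phi_e(x)=x$ itself.

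The main obstacle is continuity. The map $\phi_e$ is continuous for the topology of $H_e^\circ$ in which $0$ is isolated precisely when $\{x:e\le xx^{-1}\}=\epsilon^{-1}({\uparrow}e)$ is clopen, i.e.\ iff ${\uparrow}e$ is clopen in $E$; but a compact zero-dimensional semilattice has non-open principal filters in general (e.g.\ the Cantor cube), so the honest $\phi_e$ cannot be used at every coordinate. I would resolve this by keeping the honest $\phi_e$ only where ${\uparrow}e$ is clopen and replacing it by the constant map $x\mapsto 0$ otherwise (still a continuous homomorphism), thereby making $\Phi$ continuous, and then proving that the clopen-filter coordinates already separate points. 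This rests on the key lemma that in a compact zero-dimensional topological semilattice the clopen filters separate points and every $f\in E$ is a limit of elements $c_\lambda\le f$ with each ${\uparrow}c_\lambda$ clopen. (Note that any closed filter $F$ equals ${\uparrow}\inf F$ with $\inf F\in F$, so clopen filters are automatically principal.) Points lying in different maximal groups are then separated by a clopen filter passing through exactly one of the two idempotents; points $x\ne y$ of the same $H_f$ are separated because $xc_\lambda\to xf=x\ne y=yf\leftarrow yc_\lambda$, so $xc_\lambda\ne yc_\lambda$ for some clopen ${\uparrow}c_\lambda$. Since $S$ is compact and $\prod_{e\in E}H_e^\circ$ is Hausdorff, the continuous injection $\Phi$ is automatically a topological embedding. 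Establishing this semilattice lemma---in effect the profiniteness of $E$ together with approximation of idempotents from below---is where I expect the real work to lie.

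Finally I would close the cycle. For $(4)\Ra(5)$, embed each $H_e^\circ$ into $\exp(\widetilde H_e)$ by $h\mapsto\{h\}$ for $h\in H_e$ and $0\mapsto\widetilde H_e$; this is a homomorphism since $\widetilde H_e h=\widetilde H_e$, and it is a topological embedding precisely because $\widetilde H_e$ is non-trivial, so the whole group $\widetilde H_e$ is not a Vietoris limit of singletons. Composing with the embedding $\prod_{e}\exp(\widetilde H_e)\hookrightarrow\exp(\prod_{e}\widetilde H_e)=\exp(G)$ furnished by Theorem~\ref{t1}(1) yields $(5)$. For $(4)\Ra(2)$, each compact group $H_e$ belongs to $\mathcal H$, so $H_e^\circ\in\mathcal H_0$ by the zero-attaching operation, and the embedding $S\hookrightarrow\prod_{e}H_e^\circ$ exhibits $S$ as a subdirect product of members of $\mathcal H_0$; closure of $\mathcal H_0$ under subdirect products gives $S\in\mathcal H_0$, completing the equivalences.
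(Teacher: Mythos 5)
Your proposal is correct and follows essentially the same route as the paper: the same coordinates $h_e$ indexed by the locally minimal idempotents (those with ${\uparrow}e$ clopen) for $(3)\Ra(4)$, the same singleton/whole-group embedding $H_e^\circ\to\exp(\widetilde H_e)$ for $(4)\Ra(5)$, and the same appeal to Theorem~\ref{t2} for $(1)\Ra(3)$. The only differences are cosmetic: the paper separates points in a common maximal group by applying its separation lemma (Proposition~1 of \cite{Hr}) directly to the open up-set $\{e\in E: xe\ne ye\}$ rather than via your net approximation $c_\lambda\to f$, and it closes the cycle as $(4)\Ra(5)\Ra(2)\Ra(1)\Ra(3)\Ra(4)$, where your explicit $(4)\Ra(2)$ is in fact a cleaner justification of the step the paper calls trivial.
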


\begin{proof} It suffices to prove the implications: $(4)\Ra(5)\Ra(2)\Ra(1)\Ra(3)\Ra(4)$ among which $(5)\Ra(2)\Ra(1)$ are trivial.
\smallskip

 $(4)\Ra(5)$. Assume that $S$ embeds into the product $\prod_{e\in E}H_e^0$. For each idempotent $e\in E$ fix a non-trivial compact topological group $\widetilde H_e$ containing $H_e$ and define an embedding $f_e:H^0_e\to\exp(\widetilde H_e)$ letting $f_e(h)=\{h\}$ if $h\in H_e$ and $f_e(0)=\widetilde H_e$. 

The product of embeddings $f_e$, $e\in E$, yields embeddings 
$$S\to\prod_{e\in E}H_e^0\to \prod_{e\in E}\exp(\widetilde H_e)\to \exp(\prod_{e\in E}\widetilde H_e)$$ the latter homomorphism defined by $$\prod_{e\in E}\exp(\widetilde H_e)\ni(K_e)_{e\in E}\mapsto \prod_{e\in E}K_e\in\exp(\prod_{e\in E}\widetilde H_e).$$
\smallskip

$(1)\Ra(3)$ Assume that $S\in\mathcal H$. Then $S$ is a compact topological inverse semigroup according to Theorem~\ref{t2}(1). The semigroup $E$ of idempotents of $S$ is compact and thus contains the smallest idempotent $e\in E$ (in the sense that $ee'=e$ for all $e'\in E$). By Theorem~\ref{t2}, the principal filter ${\uparrow}e=E$ is totally disconnected and being compact, is zero-dimensional.
\smallskip

$(3)\Ra(4)$ Assume that $S$ is a compact topological inverse Clifford semigroup with zero-dimensional idempotent semilattice $E$. Let $\pi:S\to E$, $\pi:x\mapsto xx^{-1}=x^{-1}x$ be  the retraction of $S$ onto $E$. The set $E$ carries a natural partial order $\le$: $e\le e'$ iff $ee'=e$.
Let $E_0=\{e\in E:{\uparrow}e$ is open$\}$ stands for the set of locally minimal elements of $E$.  

For every $e\in E\setminus E_0$ let $h_e:S\to H_e^0$ be the trivial homomorphism mapping $S$ into the zero of $H_e^0$.

Next, for every $e\in E_0$ consider the homomorphism $h_e:S\to H_e^0$ defined by $$
h_e(s)=\begin{cases} es,&\mbox{if $s\in\pi^{-1}({\uparrow}e)$};\\
0,&\mbox{otherwise}
\end{cases}
$$

Taking the diagonal product of the homomorphisms $h_e$, $e\in E$, we obtain a homomorphism 
$$h=(h_e)_{e\in E}:S\to\prod_{e\in E}H^0_e,\quad h:s\mapsto (h_e(s))_{e\in E}.$$
We claim that $h$ is injective and thus an embedding of the compact semigroup $S$ into $\prod_{e\in E}H^0_e$.

Let $x,y\in S$ be two distinct points. If $\pi(x)\ne\pi(y)$, then either $\pi(x)\notin{\uparrow}\pi(y)$ or $\pi(y)\notin{\uparrow}\pi(x)$. We lose no generality assuming the first case. Consider the set $U=\{u\in E:\pi(x)\notin{\uparrow}u\}$ and note that it is open and $U={\uparrow}U$ where ${\uparrow}U=\{v\in E:\exists u\in U$ with $u\le v\}$. Also $\pi(y)\in U$.
By Proposition 1 of \cite{Hr} there is a continuous semilattice homomorphism $h:E\to\{0,1\}$ such that $\pi(y)\in h^{-1}(1)\subset{\uparrow}U$. The preimage $h^{-1}(1)$, being a compact subsemilattice of $E$, has the smallest element $e$, that belongs to $E_0$ because  $h^{-1}(1)={\uparrow}e$.

Now the definition of the homomorphism $h_e$ and the non-inclusion $\pi(x)\notin{\uparrow}e$ imply that $h_e(x)=0$ while $h_e(y)\in H_e$. Hence $h_e(x)\ne h_e(y)$ and $h(x)\ne h(y)$.

Finally consider the case $\pi(x)=\pi(y)$. Observe that the set $U=\{e\in E:xe\ne ye\}$ contains the idempotent $\pi(x)=\pi(y)$ and coincides with ${\uparrow}U$. Again applying Proposition 1 of \cite{Hr} we can find a continuous semilattice homomorphism $h:E\to\{0,1\}$ such that $\pi(x)=\pi(y)\in h^{-1}(1)\subset {\uparrow}U$. The preimage $h^{-1}(1)$, being a compact subsemilattice of $E$, has the smallest element $e$. Since $h^{-1}(1)={\uparrow}e$ is open in $E$, $e\in E_0$. It follows from $e\in U$ that $h_e(x)=ex\ne ey=h_e(y)$ and hence $h(x)\ne h(y)$.
\end{proof}

Theorem~\ref{t5} will be applied to characterize Clifford compact topological semigroups embeddable into the hypersemigroups of topological groups $G$ belonging to certain varieties of compact topological groups. A class $\mathcal G$ of topological groups is called a {\em variety} if it is closed under taking arbitrary Tychonov products, taking closed subgroups, and quotient groups by closed normal subgroups.

\begin{theorem}\label{t6} Let $\mathcal G$ be a non-trivial variety of compact topological groups. A  Clifford compact topological semigroup $S$ embeds into the hypersemigroup $\exp(G)$ of a topological group $G\in\mathcal G$ if and only if $S$ is a topological inverse semigroup whose idempotent semilattice $E$ is zero-dimensional and all maximal groups $H_e$, $e\in E$, belong to the class $\mathcal G$.
\end{theorem}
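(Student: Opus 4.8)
The plan is to deduce both implications from Theorem~\ref{t5}, keeping the variety $\mathcal G$ under control, and the one structural input I will need beforehand is a description of the maximal subgroups of the hypersemigroup $\exp(H)$ over a compact group $H$. I would begin with the easier direction.

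\emph{Sufficiency.} Suppose $S$ is a compact topological inverse Clifford semigroup with zero-dimensional idempotent semilattice $E$ and all maximal groups $H_e\in\mathcal G$. The implication $(3)\Ra(4)$ of Theorem~\ref{t5} already embeds $S$ into $\prod_{e\in E}H_e^0$, and this step is purely algebraic, so it needs no modification. It then suffices to rerun $(4)\Ra(5)$ while forcing the ambient group into $\mathcal G$. Since $\mathcal G$ is non-trivial, I would fix a group $G_0\in\mathcal G$ with $|G_0|\ge 2$ and, for every $e\in E$, set $\widetilde H_e=H_e\times G_0$; this is a non-trivial compact group containing $H_e$ as a closed subgroup, and $\widetilde H_e\in\mathcal G$ because $\mathcal G$ is closed under finite products and $H_e\in\mathcal G$. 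Composing the singleton embeddings $f_e:H_e^0\to\exp(\widetilde H_e)$, $f_e(h)=\{h\}$, $f_e(0)=\widetilde H_e$, with the product map $(K_e)_{e\in E}\mapsto\prod_{e\in E}K_e$ exactly as in Theorem~\ref{t5} embeds $S$ into $\exp(G)$ for $G=\prod_{e\in E}\widetilde H_e$, and $G\in\mathcal G$ since $\mathcal G$ is closed under arbitrary Tychonov products.

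\emph{Necessity.} Assume $S$ embeds via $h$ into $\exp(G)$ with $G\in\mathcal G$. Then $S\in\mathcal H$, so Theorem~\ref{t5} gives that $S$ is a topological inverse semigroup with zero-dimensional $E$, and only the membership $H_e\in\mathcal G$ remains. By the argument of Proposition~\ref{p1} I may replace $G$ by the compact group $H=\bigcup_{s\in S}h(s)$, which is a closed subgroup of the original group and hence still lies in $\mathcal G$. Fixing $e\in E$ and putting $K=h(e)$, the set $K$ is an idempotent of $\exp(H)$, i.e.\ a compact subsemigroup of the group $H$, hence a compact subgroup; and $h|H_e$ is a topological isomorphism of $H_e$ onto a compact subgroup of the maximal subgroup $\mathbb H_K$ of $\exp(H)$ at the idempotent $K$.

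The decisive step is the identification $\mathbb H_K\cong N_H(K)/K$, where $N_H(K)=\{g\in H:gKg^{-1}=K\}$. For $A\in\mathbb H_K$ one has $KA=AK=A$ and $AA'=A'A=K$ for its inverse $A'$; then for any $a,b\in A$ and $a'\in A'$ the computation $ab^{-1}=(aa')(ba')^{-1}\in K$ shows $A\subseteq Ka$, while $Ka\subseteq KA=A$, so $A=Ka$, and a symmetric computation from $A'A=K$ gives $A=aK$, whence $a\in N_H(K)$; conversely each coset $gK$ with $g\in N_H(K)$ is invertible with inverse $g^{-1}K$, and $gK\mapsto gK$ is a topological isomorphism onto $N_H(K)/K$. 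Granting this, $N_H(K)$ is a closed subgroup of $H\in\mathcal G$, hence $N_H(K)\in\mathcal G$; $K$ is a closed normal subgroup of $N_H(K)$, hence $N_H(K)/K\in\mathcal G$; and $h(H_e)$ is a compact, hence closed, subgroup of $N_H(K)/K$, so $H_e\cong h(H_e)\in\mathcal G$. I expect the main obstacle to be precisely this lemma $\mathbb H_K\cong N_H(K)/K$: one must verify both that every element of the maximal subgroup at a compact subgroup $K$ is a single $K$-coset by a normalizing element (the coset computation above) and that the Vietoris topology on these cosets agrees with the quotient topology of $N_H(K)/K$; once that is in place, all remaining steps are routine invocations of the closure properties of the variety $\mathcal G$.
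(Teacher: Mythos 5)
Your proof is correct and follows the same overall scheme as the paper's: sufficiency by rerunning the $(3)\Ra(4)\Ra(5)$ chain of Theorem~\ref{t5} with ambient groups forced into $\mathcal G$, and necessity by reducing to a compact group $H=\bigcup_{s\in S}h(s)$ and exploiting the coset structure of group elements of $\exp(H)$. The only genuine divergence is in how the necessity step is packaged. The paper fixes $e\in E$, forms the compact subgroup $H=\bigcup_{x\in H_e}h(x)$, observes via Proposition~\ref{p3}(3) that $H_0=h(e)$ is normal in $H$, and concludes $H_e\cong H/H_0\in\mathcal G$ as a quotient of a closed subgroup; you instead prove the more general structural fact that the maximal subgroup of $\exp(H)$ at a compact subgroup $K$ is topologically isomorphic to $N_H(K)/K$, re-deriving the coset identity $A=Ka=aK$ by hand rather than citing Proposition~\ref{p3}(3), and then locate $h(H_e)$ as a closed subgroup of that quotient. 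Your route costs a little more (you must also check that the Vietoris topology on the cosets matches the quotient topology, which here follows from compactness since a continuous bijective homomorphism between compact Hausdorff groups is a topological isomorphism) but buys a reusable description of all maximal subgroups of $\exp(H)$; the paper's version is leaner because Proposition~\ref{p3}(3) already supplies exactly the coset fact needed. Both arguments invoke the same closure properties of the variety $\mathcal G$, so the proofs are interchangeable.
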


This theorem will be proved in Section~\ref{pf6} after establishing the nature of group elements in the hypersemigroups.

The classes $\mathcal H$ and $\mathcal H_0$ are closed under subdirect products but are very far from being closed under homomorphic images. We shall show that the class of continuous homomorphic images of compact Clifford semigroups $S\in\mathcal H_0$ coincides with the class of all compact Clifford inverse semigroups with Lawson idempotent semilattices. We recall that a topological semilattice $E$ is called {\em Lawson} if open subsemilattices form a base of the topology of $E$. By the fundamental Lawson Theorem \cite[Th. 2.13]{CHK2} a compact topological semilattice is Lawson if and only if the continuous homomorphisms to the $\min$-interval $[0,1]$ separate points of $S$. It is known \cite[Th. 2.6]{CHK2} that each zero-dimensional compact topological semilattice is Lawson.

\begin{proposition} A topological semigroup $S$ is a continuous homomorphic image of a compact Clifford semigroup $S_0\in\mathcal H_0$ if and only if $S$ is a compact Clifford topological inverse semigroup with Lawson idempotent semilattice.
\end{proposition}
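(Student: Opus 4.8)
The plan is to establish the two implications separately: the ``only if'' part checks that the listed properties survive a continuous homomorphism, while the ``if'' part is a construction presenting $S$ as such an image.

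For the forward implication, let $f\colon S_0\to S$ be a continuous surjective homomorphism with $S_0\in\mathcal H_0$ compact Clifford. Compactness of $S$ is automatic, and $S$ is Clifford since the Clifford semigroups form a variety. The key algebraic observation is that the idempotents of $S$ are exactly the images of the idempotents of $S_0$: every idempotent $s\in S$ lifts to some $s_0\in H_e\subset S_0$, and $f(s_0)$ lies in the group $f(H_e)$, whose only idempotent is its identity $f(e)$, so $s=f(e)$. Hence the idempotent set of $S$ is a commutative band, i.e. a subsemilattice, and since $S$ is Clifford (so regular) with commuting idempotents it is an inverse semigroup by \cite[1.17]{CP}. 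Continuity of the inversion is then free of charge: by uniqueness of inverses the graph of $x\mapsto x^{-1}$ equals the closed set $\{(x,y)\in S\times S:xyx=x,\ yxy=y\}$, and a self-map of a compact Hausdorff space with closed graph is continuous. Thus $S$ is a compact topological inverse Clifford semigroup.

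It remains to see that its idempotent semilattice $E$ is Lawson, and this is the main obstacle. Since $S_0\in\mathcal H_0\subset\mathcal H$ is compact, Theorem~\ref{t5} gives that its idempotent semilattice $E_0$ is zero-dimensional, hence Lawson, and $f$ restricts to a continuous surjective semilattice homomorphism $g\colon E_0\to E$. I would deduce that $E$ is Lawson from the small-subsemilattice criterion: given $m\in E$ and an open $V\ni m$, the fiber $g^{-1}(m)$ is a compact subsemilattice contained in the open set $g^{-1}(V)$, so one chooses a subsemilattice $N$ with $g^{-1}(m)\subset\mathrm{int}\,N\subset N\subset g^{-1}(V)$; as $g$ is a closed map, $E\setminus g(E_0\setminus N)$ is then an open neighbourhood of $m$ contained in the subsemilattice $g(N)\subset V$, witnessing small subsemilattices at $m$. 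The genuine difficulty is hidden in the choice of $N$: it rests on the fact that in a Lawson compact semilattice every compact subsemilattice has a neighbourhood basis of subsemilattices. I would cite this from the Lawson theory in \cite{CHK2}; it is exactly what forces the Lawson property to persist under quotients, which is not a formal consequence of the embedding characterization.

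For the converse I first record the reduction lemma that every Lawson compact semilattice $E$ is a continuous homomorphic image of a zero-dimensional one. The Cantor set $C$, ordered as a subset of $[0,1]$, is a zero-dimensional compact chain, and the Cantor function $C\to[0,1]$ is a continuous surjective homomorphism for $\min$ because any monotone map between chains preserves binary meets. Taking powers yields a continuous surjective homomorphism $q\colon C^\kappa\to[0,1]^\kappa$; embedding $E\hookrightarrow[0,1]^\kappa$ by the Lawson theorem and setting $\widetilde E=q^{-1}(E)$ produces a compact zero-dimensional subsemilattice with $q|\widetilde E\colon\widetilde E\to E$ continuous and surjective. Finally I would build $S_0$ as the pullback of the retraction $\pi\colon S\to E$, $\pi(s)=ss^{-1}$ (a homomorphism because $S$ is Clifford inverse) along $q|\widetilde E$, namely $S_0=\{(\tilde e,s)\in\widetilde E\times S:q(\tilde e)=\pi(s)\}$. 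A routine check shows that $S_0$ is a closed subsemigroup of $\widetilde E\times S$ that is stable under the coordinatewise inversion and Clifford, with idempotent set $\{(\tilde e,q(\tilde e)):\tilde e\in\widetilde E\}\cong\widetilde E$, hence zero-dimensional. By Theorem~\ref{t5} this makes $S_0$ a compact Clifford semigroup in $\mathcal H_0$, and the coordinate projection $(\tilde e,s)\mapsto s$ is a continuous surjective homomorphism onto $S$ (surjective since $q$ is), which exhibits $S$ as the desired homomorphic image. I expect the only non-formal points here to be the verifications that $S_0$ is inverse and Clifford with the stated idempotent semilattice, which are direct coordinatewise computations.
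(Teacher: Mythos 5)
Your proof is correct, and the ``if'' direction takes a genuinely different route from the paper's. For the ``only if'' part you and the paper do essentially the same thing: deduce from Theorem~\ref{t5} that $E_0$ is zero-dimensional, hence Lawson, and push the Lawson property through the quotient $E_0\to E$. The paper simply cites \cite[Th.~2.4]{CHK2} (compact homomorphic images of Lawson semilattices are Lawson), whereas you re-derive this from the fact that compact subsemilattices of a compact Lawson semilattice admit neighbourhood bases of subsemilattices; that works, though the open set you want is $E\setminus g(E_0\setminus\mathrm{int}\,N)$ rather than $E\setminus g(E_0\setminus N)$ (the latter need not be open, since $E_0\setminus N$ is open rather than compact, so closedness of $g$ does not apply to it). Likewise the paper cites \cite{KW}, \cite{Kr}, \cite{BG} for automatic continuity of inversion where you use the closed-graph argument; both are fine. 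The real divergence is in the ``if'' part. The paper invokes Corollary~1 of \cite{Hr} to embed $S$ into a product $\prod_\alpha\widehat H_\alpha$ of cones over compact groups, resolves each cone by $H_\alpha\times C$ via the Cantor ladder, and takes $S_0$ to be the preimage of $S$ under the product of these resolutions. You instead resolve only the idempotent semilattice: embed $E$ into $[0,1]^\kappa$ by the Lawson theorem, pull back along $C^\kappa\to[0,1]^\kappa$ to get a zero-dimensional compact semilattice $\widetilde E$ mapping onto $E$, and form the fibre product of $\pi\colon S\to E$ with $\widetilde E\to E$. This is more economical --- it needs only the Lawson embedding of the semilattice $E$, not Hryniv's embedding theorem for the whole semigroup $S$ --- and the verifications you flag (that $S_0$ is a closed inverse Clifford subsemigroup with idempotent semilattice isomorphic to $\widetilde E$) do go through by the coordinatewise computations you indicate, the key point being that $\pi$ is a homomorphism because idempotents are central in a Clifford inverse semigroup. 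Both constructions then finish identically via the implication $(3)\Rightarrow(2)$ of Theorem~\ref{t5}.
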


\begin{proof} To prove the ``only if'' part, assume that a topological semigroup $S$ is the image of a compact Clifford semigroup $S_0\in\mathcal H_0$ under a continuous homomorphism $h:S_0\to S$. By Theorem~\ref{t5}(3), $S_0$ is a topological inverse Clifford semigroup with zero-dimensional idempotent semilattice $E_0$. Then $S$ is an inverse Clifford semigroup, being the homomorphic image of $S_0$, see \cite[L.II.1.10]{Pet}.
Moreover, being compact topological semigroup, $S$ is a topological inverse semigroup, see \cite{KW}, \cite{Kr} or \cite{BG}. It follows that the semigroup $E$ of idempotents of $S$ is the homomorphic image of the semilattice $E_0$. Being zero-dimensional and compact, the semilattice $E_0$ is Lawson \cite[Th.2.6]{CHK2}. Then $E$ is Lawson as the compact homomorphic image of a Lawson semilattice \cite[Th.2.4]{CHK2}.
\smallskip

To prove the ``if'' part, assume that $S$ is a compact topological inverse Clifford semigroup $S$ with Lawson semilattice $E$ of idempotents. By Corollary 1 of \cite{Hr}, $S$ embeds into a product $\prod_{\alpha A}\widehat H_\alpha$ of the cones over compact topological groups $H_\alpha$. By definition, for a compact topological group $G$ the semigroup $$\widehat H=H\times [0,1]/H\times\{0\}$$ that is the quotient semigroup of the product $H\times [0,1]$ of $H$ with the min-interval $[0,1]$ by the ideal $H\times\{0\}$ of $H\times [0,1]$.

Observe that the unit interval $[0,1]$ is the image of the standard Cantor set $C\subset[0,1]$ under a continuous monotone map $h:C\to[0,1]$ well-known under the name ``Cantor ladder''. The map $h$ can be thought as a continuous semilattice homomorphism $h:C\to[0,1]$, where both $C$ and $[0,1]$ are endowed with the operation of minimum. Then $\widehat H$ is the image of the semigroup $H\times C$, which is a compact topological inverse Clifford semigroup with zero-dimensional idempotent semilattice $C$.

Thus for each index $\alpha\in A$ we can construct a continuous surjective homomorphism $h_\alpha:S_\alpha\to \widehat H_\alpha$ of a compact topological inverse Clifford semigroup $S_\alpha$ with zero-dimensional idempotent semilattice onto the semigroups $\widehat H_\alpha$. Taking the product of those homomorphisms we obtains a continuous surjective homomorphism 
$$h:\prod_{\alpha\in A}S_\alpha\to\prod_{\alpha\in A}\widehat H_\alpha.$$
It is clear that $\prod_{\alpha\in A}S_\alpha$ is a compact topological inverse Clifford semigroup with zero-dimensional idempotent semilattice. By Theorem~\ref{t5}(2), this semigroup belongs to the class $\mathcal H_0$ and so does its subsemigroup $S_0=h^{-1}(S)$. It remains to observe that $S$ is the continuous homomorphic image of the semigroup $S_0\in\mathcal H_0$.
\end{proof}

 This proposition yields many examples of compact Clifford semigroups $S\notin\mathcal H$ that are continuous homomorphic images of compact Clifford semigroups $S_0\in\mathcal H_0\subset \mathcal H$. We have also a non-Clifford example.

\begin{example} The holomorph $\Hol(E_3)=E_3\leftthreetimes Aut(E_3)$ of the 3-element semilattice $E_3=\{e,f,ef\}$ belongs to the class $\mathcal H_0$ but contains the 2-element ideal $I=\{ef\}\times Aut(E_3)$ such that quotient semigroup $\Hol(E_3)/I$ is isomorphic to the 5-element Brandt semigroup $B(\IZ_1,2)$ and thus does not belong to the class $\mathcal H$.
\end{example}

The remaining part of the paper is devoted to the proofs of the results announced in the Introduction.

\section{Semidirect products of topological semigroups}\label{s3}

In this section we shall prove that the class $\mathcal H$ is closed under semidirect products with Abelian topological groups. 

Let $G$ be a topological group. By a topological {\em $G$-semigroup} we understand a topological semigroup $S$ endowed with a homomorphism $\sigma:G\to Aut(S)$ of $G$ to the group of topological automorphisms of $S$ such that the induced action $\tilde \sigma:G\times S\to S$, $\tilde\sigma:(g,s)\mapsto \sigma(g)(s)$, is continuous. It will be convenient to denote the element $\sigma(g)(s)$ by the symbol $gs$.

The {\em semidirect product} $S\leftthreetimes^\sigma G$ of  a topological $G$-semigroup $S$ with $G$ is the topological semigroup whose underlying topological space is $S\times G$ and the semigroup operation is defined by $(s,g)*(s',g')=(s\cdot gs',gg')$. If the action $\sigma$ of the group $G$ on $S$ is clear from the context, then we shall omit the symbol $\sigma$ and will write $S\leftthreetimes G$ instead of $S\leftthreetimes^\sigma G$.

The following  proposition describes some algebraic properties of semidirect products.

\begin{proposition}\label{semidir} Let $S\leftthreetimes G$ be the semidirect product of a topological $G$-semigroup $S$ and a topological group $G$.
\begin{enumerate}
\item $S\leftthreetimes G$ is a (topological) inverse  semigroup if and only if $S$ is a (topological) inverse  semigroup;
\item $S\leftthreetimes G$ is a topological group if and only if $S$ is a topological group;
\item $S\leftthreetimes G$ is an inverse Clifford semigroup if and only if $S$ is an inverse Clifford semigroup and $ge=e$ for any $g\in G$ and any idempotent $e$ of $S$.
\end{enumerate} 
\end{proposition}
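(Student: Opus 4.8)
The plan is to prove all three items by direct computation in the semidirect product, working element-by-element with the operation $(s,g)*(s',g')=(s\cdot gs',gg')$, and in each case reducing the required property of $S\leftthreetimes G$ to the corresponding property of $S$. The key preliminary observation I would record is a formula for inverses: a candidate inverse of $(s,g)$ should be $(g^{-1}s^{-1},g^{-1})$ when $s$ has an inverse $s^{-1}$ in $S$. Indeed, computing $(s,g)*(g^{-1}s^{-1},g^{-1})*(s,g)$ using the operation and the fact that $g\in G$ acts by automorphisms (so $g(ab)=(ga)(gb)$ and $g^{-1}(gs)=s$), one gets the first coordinate equal to $s s^{-1} s$ and the second coordinate equal to $g$; symmetrically the other regularity identity reduces to $s^{-1}ss^{-1}=s^{-1}$. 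This shows that $(s,g)$ is regular in $S\leftthreetimes G$ whenever $s$ is regular in $S$, and identifies the inverse explicitly.

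For item (1), I would first note that the idempotents of $S\leftthreetimes G$ are exactly the pairs $(e,1_G)$ with $e$ an idempotent of $S$: from $(s,g)*(s,g)=(s\cdot gs,g^2)=(s,g)$ one reads off $g^2=g$, hence $g=1_G$ since $G$ is a group, and then $s\cdot s=s$. Thus the idempotent set of $S\leftthreetimes G$ is naturally identified with $E\times\{1_G\}$, which is a subsemigroup commuting internally exactly when $E$ is commutative in $S$. Combining the regularity computation above with the criterion from \cite[1.17]{CP} (a semigroup is inverse iff it is algebraically regular and its idempotents form a commutative subsemigroup), I obtain that $S\leftthreetimes G$ is inverse iff $S$ is inverse. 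For the topological strengthening, I would observe that the inversion map $(s,g)\mapsto(g^{-1}s^{-1},g^{-1})$ is continuous iff $s\mapsto s^{-1}$ is continuous, using continuity of the action $\tilde\sigma$, of the group operations in $G$, and of the multiplication in $S$.

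Item (2) is the easiest: $S\leftthreetimes G$ is a group iff it is inverse with a single idempotent, and by the identification above the idempotents are $E\times\{1_G\}$, so there is a single idempotent iff $|E|=1$, i.e. iff $S$ is a group; the topological statement follows from part (1). For item (3), I would invoke the characterization stated in the excerpt that an inverse semigroup is Clifford iff $xx^{-1}=x^{-1}x$ for every $x$. Using the inverse formula, I would compute $(s,g)*(s,g)^{-1}$ and $(s,g)^{-1}*(s,g)$ explicitly; the second coordinate of each is $1_G$, and the first coordinates come out to be (respectively) $ss^{-1}$ and $g^{-1}(s^{-1}s)$. Setting these equal forces $ss^{-1}=g^{-1}(s^{-1}s)$ for all $s$ and all $g$; taking $g=1_G$ yields $ss^{-1}=s^{-1}s$ (so $S$ is Clifford inverse), and then the surviving condition is $e=g^{-1}e$ for every idempotent $e=s^{-1}s$ and every $g$, equivalently $ge=e$ for all $g\in G$ and all $e\in E$. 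This is exactly the stated condition, and conversely if $S$ is Clifford inverse with $G$ acting trivially on $E$ the same computation shows $(s,g)$ commutes with its inverse. I expect the main obstacle to be bookkeeping rather than conceptual: one must be careful to apply the automorphism identities $g(ab)=(ga)(gb)$ and $g(s^{-1})=(gs)^{-1}$ in the right places, since the operation mixes the $G$-action into the first coordinate, and to verify that the idempotents $ss^{-1}$ and $s^{-1}s$ are genuinely fixed by the action (which holds because automorphisms permute idempotents and the Clifford condition pins them down).
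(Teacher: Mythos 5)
Your proposal is correct and follows essentially the same route as the paper: the explicit inverse formula $(s,g)^{-1}=(g^{-1}s^{-1},g^{-1})$, the identification of the idempotents with $E\times\{1_G\}$, the criterion ``regular plus commuting idempotents,'' and the direct comparison of $xx^{-1}$ with $x^{-1}x$ for the Clifford case. The only point you gloss over is the converse transfer of regularity (that $(s,1_G)$ regular in $S\leftthreetimes G$ forces $s$ regular in $S$), which the paper verifies by the one-line computation $(s,e)(s',g)(s,e)=(s,e)\Rightarrow g=e$ and $ss's=s$.
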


\begin{proof} First observe that $S$ can be identified with the subsemigroup $S\times\{e\}$ of $S\leftthreetimes G$ where $e$ is the unique idempotent of $G$.
\smallskip

1. Assume that $S$ is an inverse semigroup. To show that $S\leftthreetimes G$ is an inverse semigroup we should check that the idempotents of $S\leftthreetimes G$ commute and each element $(s,g)\in S\leftthreetimes G$ has an inverse. For this observe that $(g^{-1}s^{-1},g^{-1})$ is an inverse element to $(s,g)$. Indeed, 
$$(s,g)*(g^{-1}s^{-1},g^{-1})*(s,g)=(ss^{-1},e)(s,g)=(ss^{-1}s,g)=(s,g).$$
By analogy we can check that $$(g^{-1}s^{-1},g^{-1})(s,g)(g^{-1}s^{-1},g^{-1})=(g^{-1}s^{-1},g^{-1}).$$

Observe that an element $(s,g)$ is an idempotent of the semigroup $S\leftthreetimes G$ is and only if $s$ and $g$ are idempotents. This observation easily implies that the idempotents of the semigroup $S\leftthreetimes G$ commute (because the  idempotents of $S$ commute).

If $S$ is a topological inverse semigroup, then the map $(\cdot)^{-1}:S\to S$, $(\cdot)^{-1}:s\mapsto s^{-1}$ is continuous. The continuity of this map can be used to show that the map 
$$(\cdot)^{-1}: S\leftthreetimes G\to S\leftthreetimes G,\quad (\cdot)^{-1}:(s,g)\mapsto (g^{-1}s^{-1},g^{-1})$$is continuous too.

Next, assume that $S\leftthreetimes G$ is an inverse semigroup. Given any element $s$ consider the element $x=(s,e)\in S\leftthreetimes G$ and find its inverse $x^{-1}=(s',g)$ in 
$S\leftthreetimes G$. It follows from $(s,e)(s',g)(s,e)=xx^{-1}x=x=(s,e)$ that $g=e$ and then $ss's=s$ and $s'ss'=s$, which means that $s'$ is the inverse element to $s$ in the semigroup $S$. Since the idempotents of $S\leftthreetimes G$ commute and lie in the subsemigroup $S\times\{e\}$, the idempotents of $S$ commute too, which yields that $S$ is an inverse semigroup. 

If  $S\leftthreetimes G$ is a topological inverse semigroups, then $S$ is a topological inverse semigroup, being a subsemigroup of $S\leftthreetimes G$.
\smallskip

2. The second item follows from the first one and the fact and a topological semigroup is a topological group if and only if it is a topological inverse semigroup with a unique idempotent.
\smallskip

3. Assume that the semigroup $S$ is inverse and Clifford, and $G$ acts trivially on the idempotents of $S$. By the first item, $S\leftthreetimes G$ is an inverse semigroup. So it remains to prove that $xx^{-1}=x^{-1}x$ for all 
$x=(s,g)\in S\leftthreetimes G$. Observe that $x^{-1}=(g^{-1}s^{-1},g^{-1})$ and thus 
$$\begin{aligned}
x^{-1}x=&\;(g^{-1}s^{-1},g^{-1})(s,g)=(g^{-1}s^{-1}g^{-1}s,e)=(g^{-1}(s^{-1}s),e)=\\=&\;(s^{-1}s,e)=(ss^{-1},e)=(s,g)(g^{-1}s^{-1},g^{-1})=xx^{-1}.
\end{aligned}$$
Here we used that $G$ acts trivially on the idempotents of $S$ and hence $g^{-1}(s^{-1}s)=s^{-1}s$. We also used that fact that $g^{-1}:s\mapsto g^{-1}s$ is an automorphism of the semigroup $S$ and thus $g^{-1}(s^{-1}s)=(g^{-1}s^{-1})(g^{-1}s)$. 
Now assume that the semigroup $S\leftthreetimes G$ is Clifford and inverse. Then $S$ is Clifford, being a subsemigroup of $S\leftthreetimes G$. It remains to show that $G$ acts trivially on the idempotents of $S$. Take any idempotent $s\in S$, any $g\in G$, and consider the element  $x=(s,g)$ and its inverse $x^{-1}=(g^{-1}s^{-1},g^{-1})$. Since $S\leftthreetimes G$ is Clifford, $xx^{-1}=x^{-1}x$, which implies that 
$$
\begin{aligned}
x^{-1}x=&\;(g^{-1}s^{-1},g^{-1})(s,g)=(g^{-1}s^{-1}g^{-1}s,e)=
(g^{-1}s^{-1}s,e)=\\=&\;(g^{-1}s,e)=xx^{-1}=(ss^{-1},e)=(s,e)\end{aligned}$$ and thus $gs=s$.
\end{proof}

If $S$ is a topological $G$-semigroup, then $\exp(S)$ has a structure of a topological $G$-semigroup with respect to the induced action $$G\times\exp(S)\to\exp(S),\quad (g,K)\mapsto gK=\{gs:s\in K\}.$$ Thus is it legal to consider the semidirect product $\exp(S)\leftthreetimes G$.

The proof of the following proposition is easy and is left to the reader.

\begin{lemma}\label{l1} The map 
$$E:\exp(S)\leftthreetimes G\to \exp(S\leftthreetimes G),\; E:(K,g)\mapsto K\times\{g\}$$is a topological embedding of the topological semigroups.
\end{lemma}

For a topological semigroup $S$ consider the Tychonov power $S^G$ as a topological $G$-semigroup with the following action of $G$:
$$(g,(s_\alpha)_{\alpha\in G})\mapsto (s_{g\alpha})_{\alpha\in G}.$$
A homomorphism $h:S\to S'$ between two $G$-semigroups is called {\em $G$-equivariant} if $h(gs)=gh(s)$ for every $g\in G$ and $s\in S$. The proof of the following lemma also is left to the reader.

\begin{lemma}\label{l2} For any topological semigroup $H$ the map
$$E:\exp(H)^G\to\exp(H^G),\; E:(K_\alpha)_{\alpha\in G}\mapsto \prod_{\alpha\in G}K_\alpha$$
is a $G$-equivariant embedding of the corresponding $G$-semigroups.
\end{lemma}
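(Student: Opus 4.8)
The plan is to verify three things for the map $E$: that it is a semigroup homomorphism, that it is $G$-equivariant, and that it is a topological embedding. Since the product $\prod_{\alpha\in G}K_\alpha$ of non-empty compacta is a non-empty compact subset of $H^G$ by the Tychonov theorem, $E$ does take values in $\exp(H^G)$, so $E$ is well defined. I note at the outset that the homomorphism and topological-embedding assertions are exactly the content of the map $E$ appearing in the proof of Theorem~\ref{t1}(1), specialized to the constant family $H_\alpha=H$ ($\alpha\in G$); since that verification nowhere uses the group structure of the factors, it applies verbatim to an arbitrary topological semigroup $H$. Thus the only genuinely new ingredient is the $G$-equivariance.

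For the homomorphism property I would use the coordinatewise operation on $\exp(H)^G$ together with the identity $\prod_\alpha(K_\alpha L_\alpha)=\big(\prod_\alpha K_\alpha\big)\cdot\big(\prod_\alpha L_\alpha\big)$ in $\exp(H^G)$: every $(c_\alpha)_\alpha$ with $c_\alpha\in K_\alpha L_\alpha$ splits coordinatewise as $c_\alpha=a_\alpha b_\alpha$ with $a_\alpha\in K_\alpha$ and $b_\alpha\in L_\alpha$, and assembling $(a_\alpha)_\alpha$ and $(b_\alpha)_\alpha$ exhibits $(c_\alpha)_\alpha$ as a product of an element of $\prod_\alpha K_\alpha$ with one of $\prod_\alpha L_\alpha$; the reverse inclusion is immediate. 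For equivariance, recall that $g$ acts on the source by the index shift $(K_\alpha)_\alpha\mapsto(K_{g\alpha})_\alpha$ and on $\exp(H^G)$ through the shift on $H^G$, so that
$$E\big(g\cdot(K_\alpha)_\alpha\big)=\prod_\alpha K_{g\alpha}=\big\{(x_{g\alpha})_\alpha:(x_\beta)_\beta\in\prod_\beta K_\beta\big\}=g\cdot\prod_\alpha K_\alpha=g\cdot E\big((K_\alpha)_\alpha\big),$$
the middle equality holding because $\alpha\mapsto g\alpha$ permutes $G$, so each shifted coordinate ranges freely over $K_{g\alpha}$.

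It remains to treat the embedding, which is where the real work lies. Injectivity is clear, since projecting $\prod_\alpha K_\alpha$ to the $\alpha$-th coordinate recovers $K_\alpha$. For continuity I would show that $E^{-1}(W^-)$ and $E^{-1}(W^+)$ are open for every open $W\subset H^G$, reducing to basic boxes $W=\prod_\alpha V_\alpha$ with $V_\alpha=H$ for all but finitely many $\alpha$. The $W^-$ case is routine: $\prod_\alpha K_\alpha$ meets such a box iff $K_\alpha\cap V_\alpha\ne\emptyset$ for the finitely many constrained indices, so $E^{-1}(W^-)=\bigcap_{\alpha:V_\alpha\ne H}\mathrm{pr}_\alpha^{-1}(V_\alpha^-)$ is open, and general $W$ are handled as unions of boxes. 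The $W^+$ case is the main obstacle and is where compactness enters: from $\prod_\alpha K_\alpha\subseteq W$ I must produce a single basic box $\prod_\alpha U_\alpha\subseteq W$ with $K_\alpha\subseteq U_\alpha$ for all $\alpha$. This is the generalized tube lemma for products of compacta, which I would prove by covering the compact set $\prod_\alpha K_\alpha$ by finitely many boxes inside $W$, collecting their finitely many constrained indices into a finite set $F$, and applying the finite-product tube lemma to the projection onto the $F$-coordinates; such a box yields the neighbourhood $\bigcap_{\alpha\in F}\mathrm{pr}_\alpha^{-1}(U_\alpha^+)$ of $(K_\alpha)_\alpha$ mapping into $W^+$. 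Finally, to see that $E$ is open onto its image it suffices, by injectivity, to check this on the subbase of the source: for open $U\subseteq H$ one has $E(\mathrm{pr}_\alpha^{-1}(U^-))=(\mathrm{p}_\alpha^{-1}U)^-\cap E(\exp(H)^G)$ and $E(\mathrm{pr}_\alpha^{-1}(U^+))=(\mathrm{p}_\alpha^{-1}U)^+\cap E(\exp(H)^G)$, where $\mathrm{p}_\alpha\colon H^G\to H$ is the coordinate projection; both right-hand sides are relatively open, and injectivity propagates this to all finite intersections and unions, so $E$ is a homeomorphism onto its image.
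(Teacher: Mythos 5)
Your proof is correct; the paper explicitly leaves this verification to the reader, and your argument supplies exactly the intended routine details (coordinatewise splitting for the homomorphism property, reindexing along the permutation $\alpha\mapsto g\alpha$ for equivariance, and the subbase check for the embedding). The only substantive point --- the Wallace/tube-lemma argument showing $E^{-1}(W^+)$ is open, carried out by collecting the finitely many constrained coordinates and applying the finite tube lemma there --- is handled properly.
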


The following immediate lemma helps to transform semigroup embedding into $G$-equivariant embedding.

\begin{lemma}\label{l3} Let $G$ be an Abelian topological group. If $f:S\to H$ is an embedding of a topological $G$-semigroup $H$ into a topological semigroup $H$, then the map
$$F:S\to H^G,\; F:s\mapsto (f(gs))_{g\in G}$$
is a $G$-equivariant embedding of the $G$-semigroup $S$ into the $G$-semigroup $H^G$.
\end{lemma}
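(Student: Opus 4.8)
The plan is to verify in turn the three asserted properties of $F$: that it is a semigroup homomorphism, that it is a topological embedding, and that it is $G$-equivariant. Here $f\colon S\to H$ is read as a semigroup embedding of the $G$-semigroup $S$ into the (plain) topological semigroup $H$, while $H^G$ carries the $G$-action $(g,(h_\alpha)_{\alpha\in G})\mapsto(h_{g\alpha})_{\alpha\in G}$ introduced above. Throughout I write $gs=\sigma(g)(s)$, so that each $\sigma(g)$ is an automorphism of $S$ and $(gg')s=g(g's)$.

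For the homomorphism property I would use that each $\sigma(g)$ is an automorphism, so that $g(ss')=(gs)(gs')$; applying the homomorphism $f$ then gives $f(g(ss'))=f(gs)\,f(gs')$ for every $g\in G$, and since multiplication in $H^G$ is coordinatewise this is exactly the statement $F(ss')=F(s)\,F(s')$.

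Continuity of $F$ is checked coordinatewise: each coordinate map $s\mapsto f(gs)$ is the composition of the continuous action map $s\mapsto gs=\tilde\sigma(g,s)$ with the continuous $f$, so $F$ is continuous into the Tychonov product. To see that $F$ is in fact an embedding, let $\pi_e\colon H^G\to H$ denote the projection onto the coordinate indexed by the unit $e\in G$; since $es=s$ we get $\pi_e\circ F=f$. This already forces $F$ to be injective (as $f$ is), and moreover $\pi_e(F(S))\subseteq f(S)$, so the continuous map $f^{-1}\circ\pi_e$ restricts to a continuous two-sided inverse of $F$ on the image $F(S)$, witnessing that $F$ is a topological embedding.

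The one place where the abelianness of $G$ is genuinely used is the $G$-equivariance, and I expect this to be the only nontrivial point. Comparing coordinates, the $\alpha$-th coordinate of $F(g_0s)$ equals $f(\alpha(g_0s))=f((\alpha g_0)s)$, whereas the $\alpha$-th coordinate of $g_0F(s)$ equals $F(s)_{g_0\alpha}=f((g_0\alpha)s)$. These two families agree for every $\alpha\in G$ precisely because $\alpha g_0=g_0\alpha$ in the abelian group $G$; without commutativity the two reindexings $\alpha\mapsto\alpha g_0$ and $\alpha\mapsto g_0\alpha$ differ and equivariance breaks down. Hence $F(g_0s)=g_0F(s)$, which completes the verification and explains why the hypothesis that $G$ is Abelian cannot be dropped.
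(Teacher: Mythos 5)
Your proof is correct and is exactly the routine verification the authors had in mind (the paper calls the lemma ``immediate'' and omits the argument): coordinatewise checking of the homomorphism property, the factorization $\pi_e\circ F=f$ through the unit coordinate to get injectivity and continuity of the inverse, and the reindexing comparison $\alpha g_0$ versus $g_0\alpha$ that isolates where commutativity of $G$ is used. Nothing is missing.
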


Finally we are able to prove the second item of Theorem~\ref{t1}.

\begin{theorem} Let $G$ be an Abelian topological group. If a topological $G$-semigroup $S$ embeds into the hypersemigroup $\exp(H)$ of a topological group $H$, then the semidirect product $S\leftthreetimes G$ embeds into the hypersemigroup $\exp(H^G\leftthreetimes G)$ of the topological group $H^G\leftthreetimes G$.
\end{theorem}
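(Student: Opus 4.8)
The plan is to assemble the desired embedding by chaining together the three lemmas of this section with a single general observation about the functoriality of the semidirect product. Throughout, the Tychonov powers $\exp(H)^G$ and $H^G$ carry the shift $G$-action $(g,(x_\alpha)_{\alpha\in G})\mapsto(x_{g\alpha})_{\alpha\in G}$; I first note that $H^G\leftthreetimes G$ is indeed a topological group, since the shift is a continuous action of $G$ by group automorphisms of the topological group $H^G$, so that the hypersemigroup $\exp(H^G\leftthreetimes G)$ is well defined.

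First I would feed the given embedding $f:S\to\exp(H)$ into Lemma~\ref{l3}, taking the topological semigroup $\exp(H)$ in the role of the target. This produces a $G$-equivariant embedding $F:S\to\exp(H)^G$, $F:s\mapsto(f(gs))_{g\in G}$. Composing $F$ with the $G$-equivariant embedding $\exp(H)^G\to\exp(H^G)$ supplied by Lemma~\ref{l2}, I obtain a single $G$-equivariant semigroup embedding $\Phi:S\to\exp(H^G)$ of the $G$-semigroup $S$ into the hypersemigroup of the topological group $H^G$, where $\exp(H^G)$ carries the $G$-action induced by the shift action on $H^G$.

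The key step is then to pass from $\Phi$ to an embedding of the semidirect products. I would record the general fact that every $G$-equivariant topological embedding $\Phi:A\to B$ which is a semigroup homomorphism between topological $G$-semigroups induces a topological embedding $(a,g)\mapsto(\Phi(a),g)$ of $A\leftthreetimes G$ into $B\leftthreetimes G$. This map is a homomorphism because
$$(\Phi(a),g)*(\Phi(a'),g')=(\Phi(a)\cdot g\Phi(a'),gg')=(\Phi(a)\cdot\Phi(ga'),gg')=(\Phi(a\cdot ga'),gg'),$$
where the middle equality uses the $G$-equivariance $g\Phi(a')=\Phi(ga')$ and the last uses that $\Phi$ is a homomorphism; and it is a topological embedding, being the product of the embedding $\Phi$ with the identity of $G$. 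Applying this to $\Phi:S\to\exp(H^G)$ yields an embedding $S\leftthreetimes G\to\exp(H^G)\leftthreetimes G$.

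Finally, Lemma~\ref{l1} applied to the $G$-semigroup $H^G$ gives the topological embedding $\exp(H^G)\leftthreetimes G\to\exp(H^G\leftthreetimes G)$, $(K,g)\mapsto K\times\{g\}$; composing it with the embedding of the previous paragraph completes the proof. I do not expect any genuine obstacle, as all the substantive work is already packaged in Lemmas~\ref{l1}--\ref{l3}. The only point demanding care is the consistent bookkeeping of the shift $G$-action along the chain, so that equivariance is available exactly where the semidirect-product functoriality is invoked, and so that the induced action on $\exp(H^G)$ used when forming $\exp(H^G)\leftthreetimes G$ matches the one required by Lemma~\ref{l1}.
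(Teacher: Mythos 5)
Your proposal is correct and follows essentially the same route as the paper: apply Lemma~\ref{l3} and then Lemma~\ref{l2} to upgrade $f$ to a $G$-equivariant embedding $S\to\exp(H^G)$, use equivariance to pass to an embedding $S\leftthreetimes G\to\exp(H^G)\leftthreetimes G$, and finish with Lemma~\ref{l1}. The only difference is that you spell out explicitly the functoriality computation and the observation that $H^G\leftthreetimes G$ is a topological group, both of which the paper leaves implicit.
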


\begin{proof} Let $f:S\to\exp(H)$ be an embedding. By Lemmas~\ref{l3} and \ref{l2}, the map
$$F:S\to \exp(H^G),\; F:s\mapsto \prod_{\alpha\in G}f(\alpha s)$$
is a $G$-equivariant embedding. The $G$-equivariantness of $F$ guarantees that the map
$$E:S\leftthreetimes G\to \exp(H^G)\leftthreetimes G,\quad E:(s,g)\mapsto (F(s),g)$$ is an embedding of the corresponding topological semigroups. Finally, applying Lemma~\ref{l1} we see that the semigroup $S\leftthreetimes G$ admits an embedding into the hypersemigroup $\exp(H^G\leftthreetimes G)$ of the topological group $H^G\leftthreetimes G$.
\end{proof}

\section{Idempotents and invertible elements of the hypersemigroups}\label{s5}

In this section given a topological group $G$ we characterize idempotent and related special elements of the hypersemigroup $\exp(G)$. We recall that an element $x$ of a semigroup $S$ is called 
\begin{itemize}
\item an {\em idempotent} if $xx=x$;
\item {\em regular} if there is an element $y\in S$ such that $xyx=x$;
\item ({\em uniquely}) {\em invertible} if there is a (unique) element $x^{-1}\in S$ such that $xx^{-1}x=x$ and $x^{-1}xx^{-1}=x^{-1}$;
\item {\em a group element} if $x$ lies in some subgroup of $S$.
\end{itemize}

It is possible to prove our results in a more general setting of cancellative topological semigroups. We recall that a semigroup $S$ is {\em cancellative} if for any $x,y,z\in S$ the equality $xz=yz$ implies $x=y$ and the equality $zx=zy$ implies $x=y$. It is easy to check that the invertible elements of a cancellative semigroup form a subgroup.

\begin{proposition}\label{p3} Let $X$ be a cancellative topological semigroup. A non-empty compact subset $K\subset X$ is 
\begin{enumerate}
\item an idempotent of the semigroup $\exp(X)$ if and only if $K$ is a compact subgroup of $X$;
\item a regular element of the semigroup $\exp(X)$ if and only if $K$ uniquely invertible in $\exp(X)$ if and only if $K=Hx$ for some compact subgroup $H\subset X$ and some invertible element $x\in X$;
\item a group element in $\exp(X)$ if  and only if $K=Hx=xH$ for some compact subgroup $H\subset X$ and some invertible element $x\in X$.
\end{enumerate}
\end{proposition}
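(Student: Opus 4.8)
Here is how I would approach the proof of Proposition~\ref{p3}. The plan is to settle item (1) first, then prove the chain of equivalences in (2) through the cycle $(a)\Rightarrow(c)\Rightarrow(b)\Rightarrow(a)$, and finally deduce (3) from (2).

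For (1) the two implications are short. If $K$ is a compact subgroup of $X$ then $KK=K$, so $K$ is an idempotent of $\exp(X)$. Conversely, if $KK=K$ then $K$ is a compact subsemigroup of $X$; being a subsemigroup of a cancellative semigroup it is cancellative, and a compact cancellative topological semigroup is a topological group by \cite[Th.1.10]{CHK1}, so $K$ is a compact subgroup of $X$. I would record here a by-product used throughout: the identity of any compact subgroup is an idempotent of $X$, and in a cancellative semigroup an idempotent is automatically a two-sided identity. Hence the mere presence of one compact subgroup makes $X$ a monoid with identity $1$, every compact subgroup contains $1$, and every compact subgroup lies in the group $U$ of invertible elements of $X$.

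For $(a)\Rightarrow(c)$ I would start from a regular $K$, say $KL_0K=K$, and pass to the genuine inverse $L=L_0KL_0$, which satisfies $KLK=K$ and $LKL=L$. Then $E:=KL$ and $F:=LK$ are idempotents of $\exp(X)$ (for instance $E^2=KLKL=(KLK)L=KL=E$), hence compact subgroups by (1); in particular $1\in E$, so there are $a\in K$ and $b\in L$ with $ab=1$, which forces $a\in U$ and $a^{-1}=b$ (a right inverse in a cancellative monoid is two-sided). Using $EK=KLK=K$ and $KF=K(LK)=(KL)K=EK=K$, a one-line set computation gives $K=Ea$: the inclusion $Ea=KLa\subset K(LK)=K$ uses $a\in K$, and conversely each $k\in K$ satisfies $k=(kb)a\in(KL)a=Ea$. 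This realises $K=Hx$ with $H:=E$ a compact subgroup and $x:=a\in U$, and since $(b)\Rightarrow(a)$ is trivial, it remains only to prove $(c)\Rightarrow(b)$.

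The hard part will be the \emph{uniqueness} of the inverse in $(c)\Rightarrow(b)$, because $\exp(X)$ is in general not an inverse semigroup (it is only when $|X|\le 2$), so uniqueness is a special feature of the coset elements and cannot be obtained from commuting idempotents. I would argue directly. Writing $K=Hx$, one checks that $L=x^{-1}H$ is an inverse. If $M$ is an arbitrary inverse, then $KMK=K$ reads $HxMHx=Hx$; cancelling $x$ on the right gives $HxMH=H$, and since $1\in H$ this forces $xM\subset H$, i.e. $M\subset x^{-1}H$. Setting $H_0:=xM\subset H$, which is a nonempty subset, and using that $H_0H=HH_0=H$ for any nonempty $H_0\subset H$, the relation $MKM=M$ becomes
\[
x^{-1}H_0\cdot Hx\cdot x^{-1}H_0=x^{-1}(H_0H)\,xx^{-1}H_0=x^{-1}HH_0=x^{-1}H,
\]
so $x^{-1}H_0=x^{-1}H$, whence $M=x^{-1}H=L$. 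This gives unique invertibility and closes the cycle.

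Finally, for (3) I would invoke the classical fact that an element of a semigroup lies in a subgroup if and only if it admits an inverse commuting with it. By (2), $K$ is invertible exactly when $K=Hx$, with unique inverse $M=x^{-1}H$, and then $KM=H$ while $MK=x^{-1}Hx$. Hence $K$ is a group element iff its inverse commutes with it, i.e. $H=x^{-1}Hx$, equivalently $xH=Hx$; combined with $K=Hx$ this says precisely $K=Hx=xH$. Compactness of all the sets appearing in these arguments is automatic, since each is the image of a product of compacta under the continuous multiplication of $X$.
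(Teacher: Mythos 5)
Your proof is correct and follows essentially the same route as the paper: both rest on the fact that a compact cancellative subsemigroup is a group (item (1)), both extract the subgroup $H$ as the idempotent $KL$ (the paper uses $KA$ for the regularity witness $A$ directly, without first normalizing to a genuine inverse), both prove uniqueness by squeezing an arbitrary inverse between $x^{-1}H$ and itself, and both reduce item (3) to the condition $xH=Hx$ via the products $KK^{-1}$ and $K^{-1}K$. The minor variations (normalizing $L_0$ to $L=L_0KL_0$, and invoking the complete-regularity characterization of group elements in (3)) do not change the substance of the argument.
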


\begin{proof} 1. If a compact subset $K\subset X$ is an idempotent of the semigroup $\exp(X)$ that is $KK=K$, then $K$ is a compact cancellative semigroup.
It is known \cite[Th. 1.10]{CHK1} that a compact cancellative semigroup is a group. 
If $K$ is subgroup of $X$ then $KK=K$. 
\smallskip

2. Assume that $K\in\exp(X)$ is a regular element of the semigroup $\exp(X)$ which means that $KAK=K$ for some non-empty compact subset $A\subset X$. Fix any elements $x\in K$ and $a\in A$.  The set $KA$, being an idempotent of the semigroup $\exp(X)$, coincides with some compact subgroup  $H$ of $X$. We claim that $K=Hx$ and the element $x$ is invertible in $X$. Observe that 
$Hx\subset HK=KAK=K$ and thus $Hxa\subset KA=H$, which implies that $xa\in H$ is invertible.  Consequently, $xa(xa)^{-1}=e=(xa)^{-1}xa$ which means that $x$ and $a$ are invertible. It follows from $Ka\subset KA=H$ that 
$$K\subset Ha^{-1}=Ha^{-1}x^{-1}x=H(xa)^{-1}x\subset HHx=Hx\subset K$$ and thus $K=Hx$.

To show that $K$ is uniquely invertible, assume additionally that $AKA=A$. In this case $A=AKA\supset aKa=aHxa=aH=x^{-1}xaH=x^{-1}H$. On the other hand, the equality $KAK=K$ implies $xAx\subset Hx$ and $A\subset x^{-1}H$. Therefore $A=x^{-1}H$ is a unique inverse element to $K$.
\smallskip

3. If $K=Hx=xH$ for some compact subgroup $H\subset X$ and some invertible element $x\in X$, then for the element $K^{-1}=x^{-1}H=Hx^{-1}$ we get $K^{-1}K=KK^{-1}=H$, which implies that $K$ is a group element of $\exp(X)$. Conversely, if $K$ is a group element, then $KK^{-1}=K^{-1}K=H$ for some compact subgroup $H\subset X$ and $K=Hx$ for some invertible element $x\in X$ (because $K$ is regular). Since $H=K^{-1}K=x^{-1}HHx=x^{-1}Hx$, we get $xH=Hx$.
\end{proof}

Theorem~\ref{t2} is a particular case of the following more general

\begin{theorem} Let $X$ be a cancellative topological semigroup and $G$ be the subgroup of invertible elements of $X$. Let $S$ be an algebraically  regular subsemigroup of $\exp(X)$ and $E$ be the set of idempotents of $S$.
\begin{enumerate}
\item The semigroup $S$ is inverse and $S\subset \exp(G)$.
\item If $G$ is a topological group, then $S$ is a topological inverse semigroup.
\item An element $x\in S$ is an idempotent if $x^2x^{-1}$ is an idempotent.
\item Any distinct conjugate idempotents of $S$ are incomparable. 
\item The set $E$ is a closed commutative subsemigroup of $S$ and for every $e\in E$ the upper cone ${\uparrow}e=\{f\in E:ef=e\}$ is totally disconnected.
\end{enumerate}
\end{theorem}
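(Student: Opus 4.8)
The plan is to reduce everything to the description of regular elements in Proposition~\ref{p3}: every regular $K\in\exp(X)$ has the form $K=Hx$ with $H$ a compact subgroup of $X$ and $x$ invertible, and its unique inverse is $x^{-1}H=\{k^{-1}:k\in K\}$. The first thing I would record is the identity
$$K^{-1}=K^*:=\{k^{-1}:k\in K\}\qquad(K\in\exp(X)\text{ regular}),$$
i.e.\ the semigroup inverse of a regular element coincides with its elementwise inverse. Since $S$ is algebraically regular, every $K\in S$ is regular in $\exp(X)$, hence uniquely invertible; replacing a witness $A$ of regularity (with $KAK=K$) by $AKA$ produces a genuine inverse lying in $S$, so by uniqueness $K^{-1}\in S$ and $S$ is closed under $(\cdot)^{-1}=(\cdot)^*$. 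Writing $K=Hx$ and using that any idempotent of a cancellative semigroup is its two-sided identity (so the identity of the compact subgroup $H$ is the identity $1$ of $X$, whence $H\subseteq G$ and $x\in G$), we get $K\subseteq G$; this already gives $S\subseteq\exp(G)$.

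For the rest of (1) I would show the idempotents commute. An idempotent of $S$ is a compact subgroup (Proposition~\ref{p3}(1)); for two of them, $e=H_1$, $f=H_2$, the product $ef=H_1H_2\in S$ is regular, so $ef=Hx$ with $H=(ef)(ef)^{-1}$. Since $(ef)^{-1}=(ef)^*=H_2H_1$, one computes $H=H_1H_2H_1$, which contains $1$ and hence contains $H_1$ and $H_2$, so $H=\langle H_1,H_2\rangle$; and $1\in ef$ forces $x\in H$, giving $ef=\langle H_1,H_2\rangle$. By the symmetric computation $fe=\langle H_1,H_2\rangle=ef$. Thus $E$ is a commutative subsemigroup, and an algebraically regular semigroup whose idempotents commute is inverse, proving (1). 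Item (2) is then immediate: on $S$ the inversion equals $K\mapsto K^*$, which is the map induced on hyperspaces by the homeomorphism $g\mapsto g^{-1}$ of the topological group $G$ and is therefore continuous, so $S$ is a topological inverse semigroup. For (3), write $x=Hg$ (so $x^{-1}=g^{-1}H$) and compute $x^2x^{-1}=HgHgg^{-1}H=HgH$; if this is an idempotent, i.e.\ a subgroup, then it contains $1$, so $h_1gh_2=1$ for some $h_i\in H$, whence $g\in H$ and $x=Hg=H$ is an idempotent.

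For (5), the commutativity and closure of $E$ under the operation come from the argument above, and $E=\{x\in S:x^2=x\}$ is closed because $S$ is Hausdorff and multiplication is continuous. The real work is in the total disconnectedness of the filters and in item (4), which I would treat together. Writing an idempotent as a compact subgroup $e=H$, the filter ${\uparrow}e$ is exactly the set of compact subgroups $K\subseteq H$ lying in $S$, topologised as a subspace of $\exp(H)$; and by the commutation analysis any two such $K_1,K_2$ must have $K_1K_2=\langle K_1,K_2\rangle$ a \emph{subgroup}, i.e.\ the compact subgroups in $E$ are pairwise permutable. Moreover two idempotents $e=H_1$, $f=H_2$ are conjugate precisely when, writing the conjugator as $z=Hw$ with $H=zz^{-1}$, the relations $e=zfz^{-1}$, $f=z^{-1}ez$ unwind to $H_2=w^{-1}H_1w$ together with $H\subseteq H_1$. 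If in addition $e,f$ are comparable, then $\langle H_1,H_2\rangle\in\{H_1,H_2\}$, so after renaming $H_2\subseteq H_1$, i.e.\ the conjugation $c_w$ carries the compact subgroup $H_1$ into itself.

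The main obstacle is the purely group-theoretic rigidity lemma: a compact subgroup carried into itself by a conjugation is fixed, $w^{-1}H_1w\subseteq H_1\Rightarrow w^{-1}H_1w=H_1$. I would prove it using compactness of $\overline{\langle w\rangle}$ (available in the setting where the ambient group is compact): choosing $n_k$ with $w^{n_k}\to 1$, the inclusions $w^{-m}H_1w^m\subseteq H_1$ for $m\ge0$ together with the closedness of $H_1$ give $H_1\subseteq w^{-1}H_1w$ in the limit, forcing equality; hence $H_1=H_2$, so distinct conjugate idempotents are incomparable, which is (4). The same compactness, now combined with the permutability noted above, is what I expect to drive the total disconnectedness in (5): the permutability rules out connected families of mutually non-permutable conjugates (e.g.\ the connected family of maximal tori of a group like $SU(2)$, whose set products are not subgroups and so cannot lie in an algebraically regular $S$), and one separates the remaining compact subgroups of $H$ inside ${\uparrow}e$ by Vietoris-clopen conditions derived from finite-dimensional quotients of $H$. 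I expect these two compact-group arguments — conjugation-rigidity for (4) and the clopen separation of pairwise-permutable compact subgroups for the total disconnectedness in (5) — to contain essentially all of the difficulty, while (1)–(3) are a direct unwinding of Proposition~\ref{p3}.
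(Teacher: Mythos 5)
Your items (1)--(3) are essentially the paper's argument: the paper deduces that $S$ is inverse directly from the unique invertibility of regular elements of $\exp(X)$ (Proposition~\ref{p3}(2)) rather than via your computation $ef=fe=\langle H_1,H_2\rangle$, but both routes work, and your replacement of a regularity witness $A$ by $AKA\in S$ correctly places the inverse inside $S$. The genuine problems are in item (4) and in the total disconnectedness part of item (5).

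For (4), everything hinges on your rigidity lemma: $w^{-1}H_1w\subseteq H_1$ implies $w^{-1}H_1w=H_1$ for a compact subgroup $H_1$. Your proof of this uses compactness of $\overline{\langle w\rangle}$, which nothing in the hypotheses provides: $X$ is merely a cancellative topological semigroup and $G$ its group of units, neither assumed compact. The lemma is false in general: in the locally compact group $G=\bigl(\prod_{n\in\IZ}\IZ_2\bigr)\leftthreetimes\IZ$, where $\IZ$ acts by the shift $w$, the compact subgroup $H_1=\prod_{n\ge 0}\IZ_2$ satisfies $w^{-1}H_1w=\prod_{n\ge 1}\IZ_2\subsetneq H_1$. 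So your argument for (4) does not close. (For comparison, the paper's own proof of (4) is also terse here --- it stops at the identity $E=xFx^{-1}$ without spelling out how incomparability follows --- so this is a genuinely delicate point in the non-compact setting, not one you can expect to finesse.)

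For (5), your reduction of ${\uparrow}e$ to the family of compact subgroups of the compact group $H=e$ sitting inside $\exp(H)$ is correct, but the separation step is missing. ``Permutability rules out connected families'' is not an argument, and ``clopen conditions derived from finite-dimensional quotients'' is only a heading. The paper's proof needs two concrete inputs that your sketch does not supply: (i) for a compact \emph{Lie} group $H$, every compact subgroup $K$ admits a neighborhood $O(K)$ such that each compact subgroup contained in $O(K)$ is conjugate into $K$ (Corollary II.5.6 of \cite{Bre}); this is what makes the lower cone ${\downarrow}E_1$ closed-and-open in the idempotent semilattice and separates $E_0$ from $E_1$ when $E_0\not\subseteq E_1$; and (ii) the realization of an arbitrary compact group as an inverse limit of compact Lie groups, which supplies a Lie quotient $h:H\to L$ with $h(E_0)\ne h(E_1)$ through which that clopen set is pulled back. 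Without (i), or some substitute for this ``no compact subgroups slightly above $K$'' phenomenon, no clopen separation is produced.
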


\begin{proof} 1. Let $S$ be a regular subsemigroup of $\exp(X)$. It follows from Proposition~\ref{p3} that each element $K\in S$, being regular, is equal to $Hx$ for some compact subgroup $H\subset G$ and some invertible element $x\in X$. Then $K=Hx\subset G$ and hence $K\in\exp(G)\subset\exp(X)$. By Proposition~\ref{p3}, $K$ is uniquely invertible in $\exp(X)$ and hence in $S$, which means that $S$ is an inverse semigroup. Moreover, the inverse $K^{-1}$ to $K$ in $S$ can be found by the natural formula: $K^{-1}=\{x^{-1}:x\in K\}$.
\smallskip

2. If the subgroup $G$ of invertible elements of $X$ is a topological group, then the inversion 
$$(\cdot)^{-1}:\exp(G)\to\exp(G),\;(\cdot)^{-1}:K\mapsto K^{-1}$$
is continuous with respect to the Vietoris topology on $\exp(G)$ and consequently, the inversion map of $S$ is continuous as well, which yields that $S$ is a topological inverse semigroup.
\smallskip

3. Let $K\in S$ be an element such that $K^2K^{-1}$ is an idempotent in $S$ and hence is a compact subgroup of $X$. By Proposition~\ref{p3}, $K=Hx$ for some compact subgroup $H$ of $X$ and some invertible element $x\in X$. Then $K^2K^{-1}=HxHxx^{-1}H=HxH$. The set $K^2K^{-1}$, being a subgroup of $X$, contains the neutral element $1$ of $X$. Then $1\in K^2K^{-1}=HxH$ and hence $x\in H$, which implies that $K=Hx=H$ is an idempotent in $\exp(X)$ and $S$.
\smallskip

4. Let $E,F$ be two distinct conjugate idempotents of the semigroup $S$. Find an element $K\in S$ such that $E=KFK^{-1}$ and $F=K^{-1}EK$. By Proposition~\ref{p3}, find a compact subgroup $H$ of $X$ and an invertible element $x\in X$ such that $K=Hx$.
We claim that $E=xFx^{-1}$. Indeed, the inclusion 
$$x^{-1}Hx=x^{-1}HHx=K^{-1}K\subset K^{-1}EK=F$$ implies $$E=KFK^{-1}=HxFx^{-1}H=xx^{-1}HxFx^{-1}Hxx^{-1}\subset xFFFx^{-1}=xFx^{-1}.$$
On the other hand,$$H=Hxx^{-1}H\subset HxFx^{-1}H=KFK^{-1}=E$$implies
$$F=K^{-1}EK=x^{-1}HEHx\subset x^{-1}EEEx=x^{-1}Ex$$ and hence $xFx^{-1}\subset E$.
\smallskip

5. Since $S$ is an inverse semigroup, the set $E$ of idempotents of $S$ is a commutative subsemigroup of $S$, see \cite[II.1.2]{Pet}. To show that $E$ is closed in $S$, pick any element $K\in S\setminus E$. By Proposition~\ref{p3}, $K=Hx$ for a compact subgroup $H\subset X$ and an invertible element $x\in X$. Since $K$ is not an idempotent, $Hx$ is not a subgroup, which means that the neutral element $1$ of $H$ does not belong to $Hx$. Let $U=X\setminus\{x\}$ and observe that $U^+=\{C\in\exp(X):1\notin C\}$ is a neighborhood of $K$ in $\exp(X)$ that contains no subgroup of $X$ and hence does not intersect the set $E$. 

 Now given an idempotent $H\in \mathcal E$ we shall prove that the upper cone ${\uparrow}H=\{E\in \mathcal E:HE=H\}$ of $H$ is totally disconnected. By Proposition~\ref{p3}, $H$ is a compact subgroup of $X$.
It follows that ${\uparrow}H\subset\exp(H)$. The total disconnectedness of ${\uparrow}H$ will be proven as soon as given two distinct elements $E_0,E_1\in{\uparrow}H$ we find a closed-and-open subset $\U\subset{\uparrow}H$ such that $E_0\in\U$ but $E_1\notin\U$. We loose no generality assuming that $X=H$ and hence $\mathcal E={\uparrow}H\subset\exp(H)$.

We first consider the special case when $H$ is a Lie group. Without loss of generality $E_0\not\subset E_1$ and hence $E_0\notin {\downarrow}E_1=\{E\in\mathcal E: E\subset E_1\}$. So, it remains to prove that the lower cone ${\downarrow}E_1$ is closed-and open in $\mathcal E$. The closedness of ${\downarrow}E_1$ follows from the continuity of the semigroup operation and the equality ${\downarrow}E_1=\{E\in\mathcal E:EE_1=E_1\}$. To prove that ${\downarrow}E_1$ is open in $\mathcal E$, take any $K\in{\downarrow}E_1$. The set $K\in\exp(H)$, being an idempotent of the semigroup $\mathcal E$ is a closed subgroup of $H$. 

By Corollary~II.5.6 of \cite{Bre} the subgroup $K$ of the compact Lie group $H$ has an open neighborhood $O(K)\subset H$ such that for each compact subgroup $C\subset O(K)$ satisfies the inclusion $xCx^{-1}\subset K$ for a suitable point $x\in H$. We shall derive from this that $C=K$ provided $C\supset K$. Indeed, $C\supset K$ and $xCx^{-1}\subset K$ imply $xKx^{-1}\subset xCx^{-1}\subset K$. Being a homeomorphic copy of the group $K$, the subgroup $xKx^{-1}\subset K$ must coincide with $K$ (it has the same dimension and the same number of connected components). Consequently, $xCx^{-1}=K$ and hence $C$, being homeomorphic to its subgroup $K$, coincides with $K$ too.

The continuity of the semigroup operation of $\mathcal E$ yields a neighborhood $O_1(K)\subset\mathcal E$ of $K$ such that $EK\subset O(K)$ for each  $E\in O_1(K)$.
We claim that $O_1(K)\subset {\downarrow}E_1$. Take any element $E\in O_1(K)$ and observe that the product $EK$, being an idempotent in the semigroup $\mathcal E$,  is a compact subgroup of $H$ containing the subgroup $K$. Now the choice of the neighborhood $O(K)$ guarantees that $E\subset EK\subset K\subset E_1$ and hence $E\in{\downarrow}E_1$. This proves that $O_1(K)\subset{\downarrow}E_1$, witnessing that ${\downarrow}E_1$ is open in $\mathcal E$.

Now we are able to finish the proof assuming that $H$ is an arbitrary compact topological group. Given distinct elements $E_0,E_1\in\mathcal E\subset\exp(H)$ we should find an closed-and-open subset $\U\subset\mathcal E$ containing $E_0$ but not $E_1$. 
The topological group $H$, being compact, is the  limit of an inverse spectrum consisting of compact Lie groups. Consequently, we can find a continuous homomorphism $h:H\to L$ onto a compact Lie group $L$ such that $h(E_0)$ and $h(E_1)$ are distinct subgroups of $L$. It follows that $h(\mathcal E)=\{h(E):E\in\mathcal E\}$ is an idempotent semigroup of the hypersemigroup $\exp(L)$. Now the particular case considered above yields a closed-and-open subset $\V\subset h(\mathcal E)$ containing $h(E_0)$ but not $h(E_1)$. By the continuity of the homomorphism $h$ the set $\U=\{K\in\mathcal E:h(K)\in\V\}$ is closed-and-open in $\mathcal E$. It contains $E_0$ but not $E_1$. This proved the total disconnectedness of the upper cone ${\uparrow}H$.
\end{proof}

\section{Proof of Theorem~\ref{t6}}\label{pf6}

In this section we will prove the Theorem~\ref{t6}. Given a Clifford compact topological semigroup $S$ and a non-trivial variety $\mathcal G$ of compact topological groups we should prove that $S$ embeds into the hypersemigroup $\exp(G)$ of a topological group $G\in\mathcal G$ if and only if $S$ is a topological inverse semigroup whose idempotent semilattice $E$ is zero-dimensional and all maximal groups $H_e$, $e\in E$, belong to the class $\mathcal G$.

To prove the ``if'' part, assume that $S$ is a compact Clifford topological inverse semigroup whose idempotent semilattice $E$ is zero-dimensional and all maximal groups $H_e$, $e\in E$, belong to the class $\mathcal G$. For every $e\in E$ let $\widetilde H_e=H_e$ if $H_e$ is not trivial and $\widetilde H_e\in\mathcal G$ be any non-trivial compact group if $H_e$ is trivial (such a group $\tilde H_e$ exists because the variety $\mathcal G$ is not trivial). Since $\mathcal G$ is closed under taking Tychonov products, the compact topological group $G=\prod_{e\in E}\widehat H_e$ belongs to $\mathcal G$. Finally, by Theorem~\ref{t5}(5), the semigroup $S$ embeds into $\exp(G)$.
\smallskip

To prove the ``only if'' part, assume that $S$ embeds into the hypersemigroup $\exp(G)$ over a topological group $G\in\mathcal G$.
By Theorem~\ref{t5}(3), $S$ is a compact topological inverse Clifford semigroup with zero-dimensional idempotent semilattice $E$. It remains to show that each maximal group $H_e$, $e\in E$, of $S$ belongs to $\mathcal G$. The embedding of $S$ into $\exp(G)$ induces an embedding $h:H_e\to\exp(G)$. The image $H_0=h_(e)$, being an idempotent in $\exp(G)$, is a compact subsemigroup of $G$ and thus a compact subgroup of $G$ according to Theorem 1.10 \cite{CHK1}. The same is true for the semigroup $H=\bigcup_{x\in H_e}h(x)$. It is a compact subgroup of $G$. We claim that $H_0$ is a normal subgroup of $H$.

Indeed, for any $x\in H$ we can find a point $z\in H_e$ with $x\in h_e(z)$. It  follows from (the proof of) Proposition~\ref{p3}(3) that $h_e(z)=xH_0=H_0x$ $xH_0x^{-1}=H_0$, witnessing that the subgroup $H_0$ is normal in $H$.

Let $\pi:H\to H/H_0$ be the quotient homomorphism. It follows from Proposition~\ref{p3}(3) that the composition $\pi\circ h_e:H_e\to H/H_0$ is a bijective continuous homomorphism. Because of the compactness of $H_e$, the group $H_e$ is isomorphic to $H/H_0$, which, being the quotient group of the closed subgroup $H$ of the group $G\in\mathcal G$ belongs to the variety $\mathcal G$.

\section{Acknowledgements}

The authors express their sincere thanks to Oleg Gutik for fruitful discussions on the subject of the paper.
\newpage


\begin{thebibliography}{}

\bibitem{BG} Banakh T.O., Gutik O.V. {\em On the Continuity of Inversion in Countably Compact Inverse Topological Semigroups}, Semigroup Forum {\bf 68} (2004), 411-418. 

\bibitem{Ber} Bershadskii S.G., {\em Imbeddability of semigroups in a global supersemigroup over a group},  Semigroup varieties and semigroups of endomorphisms, Leningrad. Gos. Ped. Inst., Leningrad, 1979, 47--49.

\bibitem{BL} Bilyeu~R.G., Lau~A.Y.W., {\em Representations into the hyperspace of a compact group}, Semigroup Forum  {\bf 13} (1977), 267-270.

\bibitem{Bre} Bredon~G.E. {\em Introduction to compact transformation group}, Academic press, New York London, 1972.

\bibitem{CHK1} Carruth~J.H., Hildebrant~J.A., Koch~R.J.,  {\em The theory of topological semigroups}, v.1 Marcel Dekker, 1983.
 
\bibitem{CHK2} Carruth~J.H., Hildebrant~J.A., Koch~R.J.,  {\em The theory of topological semigroups}, v.2 Marcel Dekker, 1986.

\bibitem{CP} Clifford~A.H., Preston~G.B., {\em The algebraic theory of semigroups}, -- Moscow: Mir, 1972 (in Russian).

\bibitem{Hr} Hryniv~O. {\em Universal inverse topological semigroups},  Semigroup Forum {\bf 75}:3 (2007), 683--689.

\bibitem{KW} Koch~R.J., Wallace A.D. {\em Notes on inverse semigroups}, Rev. Roum. Math. Pures Appl. {\bf 9}(1) (1964), 19-24.

\bibitem{Kr} Kruming P.D. Structually ordered semigroups, Izv. Vyssh. Uchebn. Zaved., Mat. {bf 6}(43) (1964), 78-87 (in Russian).
 
\bibitem{Pet} Petrich~M., {\em Introduction to semigroups}, Charles E. Merrill Publishing Co., Columbus, Ohio, 1973.

\bibitem{TZ} Teleiko~A., Zarichnyi~M., {\em Categorial Topology of Compact Hausdorff Spaces}, VNTL Publ., 1999.

\bibitem{Trn} Trnkova~V., {\em On a representation of commutative semigroups}, Semigroup Forum {\bf 10:3} (1975), 203-214.

\end{thebibliography}
\end{document}